\newcommand{\printname}[1] {}
\newtheorem{theorem}{Theorem}[section]
\newtheorem{proposition}[theorem]{Proposition}
\newtheorem{lemma}[theorem]{Lemma}
\newtheorem{corollary}[theorem]{Corollary}
\newtheorem{definitions}[theorem]{Definitions}
\newtheorem{example}[theorem]{Example}
\newtheorem{remark}[theorem]{Remark}
\newcommand{\rmap}{\longrightarrow}
\begin{document}
\title{Formal equivalence of Poisson structures around Poisson submanifolds}
\author{Ioan M\v arcu\c t}
\address{Depart. of Math., Utrecht University, 3508 TA Utrecht,
The Netherlands}
\email{I.T.Marcut@uu.nl}
\begin{abstract}


Let $(M,\pi)$ be a Poisson manifold. A Poisson submanifold $P\subset M$ gives rise to an algebroid $A_P\to P$, to which we associate certain chomology groups
which control formal deformations of $\pi$ around $P$. Assuming that these groups vanish,  we prove that $\pi$ is formally rigid around $P$, i.e.\ any other Poisson structure on $M$, with the same first order jet along $P$ as $\pi$ is formally Poisson diffeomorphic to $\pi$.
When $P$ is a symplectic leaf, we find a list of criteria which imply that these cohomological obstructions vanish. In particular we obtain a formal version of the normal form theorem for Poisson manifolds around symplectic leaves.\\

\noindent \emph{Keywords}: Poisson geometry, Lie algebroid, graded Lie algebra.\\
\noindent \emph{MSC2000}: primary 53D17, 58H15;  secundary 70K45, 17B55, 17B70.


\end{abstract}
\maketitle

\setcounter{tocdepth}{1}

\section{Introduction}

A \textbf{Poisson bracket} on a manifold $M$ is a Poisson algebra
structure on the space of smooth functions on $M$, i.e.\ Lie bracket
$\{\cdot,\cdot\}$ on $C^{\infty}(M)$ satisfying the derivation
property
\begin{equation}\label{derivation}
\{f,gh\}=\{f,g\}h+\{f,h\}g, \quad (\forall) f,g,h \in C^{\infty}(M).
\end{equation}
Equivalently, it can be given by a bivector $\pi\in\mathfrak{X}^2(M)$ which satisfies $[\pi,\pi]=0$. The two definitions are related by the
formula:
\[\langle\pi,df\wedge dg\rangle=\{f,g\},\quad (\forall) f,g \in C^{\infty}(M).\]
An immersed submanifold $\iota:P\to M$ is called a \textbf{Poisson submanifold} of $M$ if $\pi$ is tangent to $P$. This insures that $\pi_{|P}$
is a Poisson structure on $P$ for which restriction map
\[\iota^*:C^{\infty}(M) \to C^{\infty}(P)\]
is a Lie algebra homomorphism. We regard the Poisson algebra $(C^{\infty}(P),\{\cdot,\cdot\})$ as the $0$-th order approximation of the Poisson
structure on $M$. If $P$ is embedded, we have that $P$ is a Poisson submanifold if and only if its vanishing ideal \[I(P)=\{f\in C^{\infty}(M) |
\iota^*(f)=0\}\] is an ideal in the Lie algebra $(C^{\infty}(M),\{\cdot,\cdot\})$. Assuming that $P$ is also closed \footnote{Since we study
local properties of $(M,\pi)$ around $P$, only the condition that $P$ is embedded is essential, closeness can be achieved by replacing $M$ with
a tubular neighborhood of $P$.}, we have a canonical identification of Poisson algebras
\[(C^{\infty}(P),\{\cdot,\cdot\})=(C^{\infty}(M)/I(P),\{\cdot,\cdot\}).\]
This gives a recipe for construction higher order approximations, for example the first order approximation fits into an exact sequence of
Poisson algebras
\begin{equation}\label{shortexact}
0\to (I(P)/I^{2}(P),\{\cdot,\cdot\})\to(C^{\infty}(M)/I^{2}(P),\{\cdot,\cdot\})\to (C^{\infty}(P),\{\cdot,\cdot\}) \to 0.
\end{equation}
The Poisson algebra structures in this sequence depend only on $j^1_{|P}\pi$, the first jet of $\pi$ along $P$. A better way to describe
(\ref{shortexact}) is using Lie algebroids, as will be explained in section \ref{The first order data}, the sequence (\ref{shortexact}) gives
rise to a Lie algebroid structure on $T^*_PM$, which is will be denoted by $A_P$ and is an extension of the form:
\begin{equation}\label{shortexact2}
0\to TP^{\circ}\to A_P\to T^*P\to 0,
\end{equation}
where $TP^{\circ}\subset T^*_PM=A_P$ is the annihilator of $TP$ and $T^*P$ is the cotangent algebroid of $(P,\{\cdot,\cdot\})$. In particular we
obtain a representation of $A_P$ on $TP^{\circ}$ and thus also on its symmetric powers $\mathcal{S}^{k}(TP^{\circ})$.

In this paper we study formal equivalence of Poisson structure around Poisson submanifolds, more precisely we obtain the following result:
\begin{theorem}\label{Proposition_formal_equivalence}
Let $\pi_1$ and $\pi_2$ be two Poisson structures on $M$, such that $P\subset M$ is an embedded Poisson submanifold for both, and such that they
have the same first jet along $P$. If their common algebroid $A_P$ has the property that
\[H^{2}(A_P;\mathcal{S}^{k}(TP^{\circ}))=0, \ \ (\forall) \  k\geq 2,\]
then the two structures are formally Poisson diffeomorphic. More precisely, there exists a diffeomorphism
\[\psi:\mathcal{U}\to \mathcal{V}, \ \ \psi_{|P}=id_P \ \,d\psi_{|T_PM}=id_{T_PM},\]
where $\mathcal{U}$ and $\mathcal{V}$ are open neighborhoods of $P$, such that $\pi_{1|\mathcal{U}}$ and $\psi^*(\pi_{2|\mathcal{V}})$ have the
same infinite jet along $P$:
\[j_{|P}^{\infty}(\pi_{1|\mathcal{U}})=j^{\infty}_{|P}(\psi^*(\pi_{2|\mathcal{V}})).\]
\end{theorem}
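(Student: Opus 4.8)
I would prove this by an order-by-order deformation argument along $P$: construct diffeomorphisms matching successively higher jets of $\pi_1$ and $\pi_2$ along $P$, using $H^2(A_P;\mathcal S^k(TP^\circ))=0$ to remove the obstruction at each step, and then invoke Borel's lemma to promote the resulting formal diffeomorphism to an honest one.

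First I would set up some bookkeeping. Say two bivectors \emph{agree to order $k$ along $P$} if their difference has all coefficients in $I^k(P)$; the hypothesis $j^1_{|P}\pi_1=j^1_{|P}\pi_2$ says precisely that $\pi_1$ and $\pi_2$ agree to order $2$. A bivector $Q$ with coefficients in $I^k(P)$, $k\ge1$, has a well-defined leading coefficient $\operatorname{lead}_k(Q)\in\Gamma\bigl(\mathcal S^k(TP^\circ)\otimes\wedge^2 A_P^*\bigr)=C^2\bigl(A_P;\mathcal S^k(TP^\circ)\bigr)$, and a vector field $X$ with coefficients in $I^k(P)$ has $\operatorname{lead}_k(X)\in C^1\bigl(A_P;\mathcal S^k(TP^\circ)\bigr)$ (recall $A_P=T^*_PM$, so $A_P^*=T_PM$). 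I will use two facts following from the construction of $A_P$ and of its representation on $TP^\circ$ in Section~\ref{The first order data}: (i) since $\pi$ is tangent to $P$, the Schouten bracket of $\pi$ with a multivector having coefficients in $I^k(P)$, $k\ge1$, again has all coefficients in $I^k(P)$ --- tangency of $\pi$ being exactly what compensates the one derivative that would otherwise drop the order; (ii) the leading coefficient of this bracket depends only on $j^1_{|P}\pi$ and on the leading coefficient of the multivector, and equals $d_{A_P}$ of the latter, where $A_P$ acts on $\mathcal S^k(TP^\circ)$ through the symmetric power of its representation.

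The inductive step. Suppose $\pi_1$ and $\pi_2$ agree to order $m$ along $P$, $m\ge2$; set $Q:=\pi_2-\pi_1$ and $\sigma:=\operatorname{lead}_m(Q)\in C^2(A_P;\mathcal S^m(TP^\circ))$. From $0=[\pi_2,\pi_2]=[\pi_1,\pi_1]+2[Q,\pi_1]+[Q,Q]=2[Q,\pi_1]+[Q,Q]$ and (i): the coefficients of $[Q,\pi_1]$ lie in $I^m(P)$ while those of $[Q,Q]$ lie in $I^{2m-1}(P)\subseteq I^{m+1}(P)$; comparing order-$m$ leading coefficients and using (ii) gives $d_{A_P}\sigma=0$. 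Since $m\ge2$, $H^2(A_P;\mathcal S^m(TP^\circ))=0$, so $\sigma=d_{A_P}\tau$ for some $\tau\in C^1(A_P;\mathcal S^m(TP^\circ))$. Choose a vector field $X$ near $P$ with coefficients in $I^m(P)$ and $\operatorname{lead}_m(X)=\tau$, and let $\psi_m$ be the time-one flow of $-X$ (defined near $P$ after shrinking; as $m\ge2$, $X$ vanishes to second order along $P$, whence $\psi_m|_P=\operatorname{id}_P$ and $d\psi_m|_{T_PM}=\operatorname{id}$). Then $\psi_m^*\pi_2=\pi_2-\mathcal L_X\pi_2+(\text{a bivector with coefficients in }I^{m+1}(P))$, and by (i)--(ii), using $j^1_{|P}\pi_2=j^1_{|P}\pi_1$, the leading coefficient of $\mathcal L_X\pi_2$ equals $d_{A_P}\tau=\sigma$; hence $\psi_m^*\pi_2$ and $\pi_1$ agree to order $m+1$ along $P$. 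Replacing $\pi_2$ by $\psi_m^*\pi_2$ preserves the hypotheses ($\psi_m^*\pi_2$ is again Poisson with $P$ a Poisson submanifold, and $\psi_m$ has trivial $1$-jet along $P$), so the induction proceeds.

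Finally, assembly. The composition $\Psi_N:=\psi_2\circ\cdots\circ\psi_N$ makes $\Psi_N^*\pi_2$ agree with $\pi_1$ to order $N+1$ along $P$, and since $\psi_m-\operatorname{id}$ has coefficients in $I^m(P)$, the jets $j^\infty_{|P}\Psi_N$ stabilize at each finite order as $N\to\infty$; their limit is a formal diffeomorphism of $M$ along $P$ fixing $P$ pointwise and with identity linear part along $T_PM$. (Equivalently, one may carry out the whole induction in the formal completion of $M$ along $P$, where flows and the infinite composition are unconditional.) By Borel's lemma, realize this infinite jet by a smooth map $\psi$ defined near $P$; since $\psi|_P=\operatorname{id}_P$ with $P$ closed and $d\psi|_{T_PM}=\operatorname{id}$, after shrinking $\psi$ is a diffeomorphism $\mathcal U\to\mathcal V$ onto an open neighbourhood of $P$, and $j^\infty_{|P}(\psi^*\pi_2)=j^\infty_{|P}\pi_1$ by construction. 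The main obstacle is establishing (i) and (ii) --- that tangency of $\pi$ to $P$ forbids any loss of order under the Schouten bracket, and that the surviving leading term is governed by $A_P$ and $j^1_{|P}\pi$ alone; granted these, the cocycle identity, the cohomological step and the passage to the limit are routine.
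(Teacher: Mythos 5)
Your proposal is correct in outline, and at its core it is the concrete, unrolled form of the paper's argument: your facts (i)--(ii) are exactly Proposition~\ref{Proposition_isomorphic_complexes} (the isomorphism $\tau_k$ between $I^k(P)\mathfrak{X}^{\bullet}(M)/I^{k+1}(P)\mathfrak{X}^{\bullet}(M)$ with the differential $[\pi,\cdot]$ and $\Omega^{\bullet}(A_P;\mathcal{S}^k(TP^{\circ}))$ with $d_{\nabla^k}$), and this is where the real computation lies -- you correctly flag it as the main obstacle but do not carry it out. Where the routes genuinely differ is in the packaging of the induction and in the final realization. The paper does not iterate at the level of bivectors and diffeomorphisms: it passes to the complete filtered graded Lie algebra $\hat{\mathfrak{X}}^{\bullet+1}_P(M)$ of formal multivector fields, where both infinite jets become Maurer--Cartan elements, and invokes the abstract gauge-equivalence criterion (Theorem~\ref{Teo1}), whose proof is your induction done once and for all with homotopy operators, the corrections being multiplied together by Campbell--Hausdorff into a \emph{single} formal vector field $X$. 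Then Lemma~\ref{gauge_real} realizes $X$ by Borel's theorem as an honest vector field, made complete by a cutoff, and takes its time-$1$ flow; this produces a global diffeomorphism of $M$ with the required jet property for free. Your assembly instead composes the flows $\psi_2\circ\cdots\circ\psi_N$, takes the (stabilizing) limit of jets, and applies Borel to the limiting formal \emph{map}; this works, but it leaves you with an extra step the paper's single-flow trick avoids: a smooth map with $\psi|_P=\mathrm{id}_P$ and $d\psi|_{T_PM}=\mathrm{id}$ is a local diffeomorphism along $P$, and to conclude it is a diffeomorphism of some neighborhood $\mathcal{U}$ onto an open set you need the standard lemma that a map which is an immersion along a closed (possibly noncompact) subset and restricts to a homeomorphism of that subset is injective on a neighborhood -- true, but it should be stated, since $P$ is not assumed compact. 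Apart from this, and from harmless sign conventions in the step $\psi_m^*\pi_2=\pi_2-\mathcal{L}_X\pi_2+O(I^{m+1}(P))$, your inductive scheme (cocycle from $2[Q,\pi_1]+[Q,Q]=0$, coboundary from $H^2(A_P;\mathcal{S}^m(TP^{\circ}))=0$, correction by a flow with coefficients in $I^m(P)$) matches the mechanism inside the proof of Theorem~\ref{Teo1}.
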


Applying Theorem \ref{Proposition_formal_equivalence} to the linear Poisson structure on the dual of a compact, semi-simple Lie algebra, we
obtain the following result.

\begin{corollary}\label{C_Dual_ss_Lie}
Let $\mathfrak{g}$ be a semi-simple Lie algebra of compact type and consider $\pi_{\mathrm{lin}}$ the linear Poisson structure on
$\mathfrak{g}^*$. Let $\mathbb{S}({\mathfrak{g}})\subset \mathfrak{g}^*$ be the sphere in $\mathfrak{g}^*$  centered at 0, of radius 1 with
respect to some invariant inner product. Then $\mathbb{S}({\mathfrak{g}})$ is a Poisson submanifold and any Poisson structure $\pi_1$ defined in
some open neighborhood of $\mathbb{S}({\mathfrak{g}})$, such that
\[j^1_{|P}(\pi_{\mathrm{lin}})=j^1_{|P}(\pi_1),\]
is formally Poisson diffeomorphic to $\pi_{\mathrm{lin}}$.
\end{corollary}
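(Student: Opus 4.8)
The plan is to verify that the cohomological hypothesis of Theorem \ref{Proposition_formal_equivalence} holds for the Poisson submanifold $P=\mathbb{S}(\mathfrak{g})\subset\mathfrak{g}^*$. First I would record why $\mathbb{S}(\mathfrak{g})$ is a Poisson submanifold: for the linear Poisson structure on $\mathfrak{g}^*$, the symplectic leaves are the coadjoint orbits, and the Casimir function $\xi\mapsto\langle\xi,\xi\rangle$ defined by an invariant inner product is constant on orbits; hence its regular level set $\mathbb{S}(\mathfrak{g})$ is a union of coadjoint orbits, so $\pi_{\mathrm{lin}}$ is tangent to it. The conormal bundle $T\mathbb{S}^{\circ}\subset T^*_{\mathbb{S}}\mathfrak{g}^*$ is the trivial real line bundle spanned by the differential of the Casimir, and — this is the key point — it is spanned by a \emph{globally defined $A_P$-invariant section}, because the Casimir Poisson-commutes with everything. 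Consequently the $A_P$-representation on $T\mathbb{S}^{\circ}$ is the trivial one on $\underline{\mathbb{R}}$, and on its symmetric powers $\mathcal{S}^k(T\mathbb{S}^{\circ})\cong\underline{\mathbb{R}}$ it is again trivial for every $k$.

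So the obstruction spaces all reduce to $H^2(A_P;\underline{\mathbb{R}})$, the Lie algebroid cohomology of $A_P$ with trivial coefficients, and it remains to show this vanishes. I would analyze $A_P$ via the extension \eqref{shortexact2}: $0\to T\mathbb{S}^{\circ}\to A_P\to T^*\mathbb{S}\to 0$, where $T^*\mathbb{S}$ is the cotangent algebroid of the Poisson manifold $(\mathbb{S}(\mathfrak{g}),\pi_{\mathrm{lin}}|_{\mathbb{S}})$. Since the kernel is the trivial line, $A_P$ is (at worst) a one-dimensional abelian extension of $T^*\mathbb{S}$, classified by a class in $H^2(T^*\mathbb{S};\underline{\mathbb{R}})$, and there is a Gysin-type long exact sequence relating $H^\bullet(A_P;\underline{\mathbb{R}})$ to $H^\bullet(T^*\mathbb{S};\underline{\mathbb{R}})$. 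Thus the problem is pushed to computing Poisson cohomology of $\mathbb{S}(\mathfrak{g})$ in low degrees. Here I would use that $\mathbb{S}(\mathfrak{g})$ is compact and that, up to scaling, it is the unit sphere foliated by coadjoint orbits; by a Hodge/averaging argument over the compact group $G$ (or by invoking known computations of Poisson cohomology of such Lie–Poisson spheres, and the fact that the Poisson cohomology of $\mathfrak{g}^*$ near a regular value is controlled by the isotropy), one shows $H^2$ vanishes. Alternatively — and more robustly — I would appeal to the compactness of $G$ together with semisimplicity, which kills $H^1(\mathfrak{g})$ and $H^2(\mathfrak{g})$ (Whitehead's lemmas), and propagate this through the leafwise-symplectic structure of $A_P$.

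The main obstacle I anticipate is the honest computation (or citation) that $H^2(A_P;\underline{\mathbb{R}})=0$: the reduction to trivial coefficients is easy and conceptual, but extracting the vanishing of the relevant degree-2 Lie algebroid cohomology of $A_P$ requires understanding $A_P$ concretely over the sphere. The cleanest route is probably to exhibit $A_P$ near $\mathbb{S}(\mathfrak{g})$ as isomorphic to the action algebroid $\mathfrak{g}\ltimes(\mathbb{S}(\mathfrak{g})\times\mathbb{R})$ coming from the coadjoint action together with the radial direction — which, since $G$ is compact and semisimple, has vanishing cohomology in degrees $1$ and $2$ by averaging plus Whitehead — and then invoke Theorem \ref{Proposition_formal_equivalence} verbatim. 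I would close by remarking that this gives a formal linearization statement in the spirit of Conn's theorem, but localized around the sphere rather than the origin.
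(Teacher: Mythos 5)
Your identification of $\mathbb{S}(\mathfrak{g})$ as a Poisson submanifold is the paper's argument as well (the sphere is a union of coadjoint orbits, equivalently a level set of the Casimir $C(\xi)=\langle\xi,\xi\rangle$), and your reduction of the coefficients is correct and is a genuinely nice extra observation not needed in the paper: since $[df,dC]=d\{f,C\}=0$, the restriction $dC_{|P}$ is a nowhere-vanishing flat section of $TP^{\circ}$, so each $\mathcal{S}^k(TP^{\circ})$ is the trivial representation. Two small corrections to your picture of $A_P$: the relevant algebroid is $A_P\cong\mathfrak{g}\ltimes\mathbb{S}(\mathfrak{g})$, the action algebroid over the sphere itself (what you write, $\mathfrak{g}\ltimes(\mathbb{S}(\mathfrak{g})\times\mathbb{R})$, is the cotangent algebroid of a punctured neighborhood, not $A_P$); and for $\mathrm{rank}\,\mathfrak{g}\geq 2$ the sphere contains singular coadjoint orbits, so $A_P$ is not transitive and ``leafwise-symplectic'' or transitive-algebroid arguments need care.

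The genuine gap is the decisive step $H^2(A_P;\underline{\mathbb{R}})=0$, which you assert but do not prove, and the routes you sketch do not deliver it as stated. The Gysin route cannot rest on vanishing of the Poisson cohomology of the sphere: already for $\mathfrak{g}=\mathfrak{su}(2)$ the sphere is a symplectic $S^2$, so its degree-two Poisson cohomology is $H^2_{\mathrm{dR}}(S^2)=\mathbb{R}\neq 0$; to conclude $H^2(A_P)=0$ this way you would have to show the extension class acts isomorphically, which is precisely the nontrivial computation. Likewise ``averaging plus Whitehead'' is not yet an argument: for the action algebroid, $H^{\bullet}(A_P;\underline{\mathbb{R}})$ is Chevalley--Eilenberg cohomology of $\mathfrak{g}$ with coefficients in the infinite-dimensional module $C^{\infty}(\mathbb{S}(\mathfrak{g}))$, to which Whitehead's lemmas do not apply directly; one must either justify passage to the $G$-isotypic decomposition or, as the paper does, integrate $A_P$ to the action groupoid $G\ltimes\mathbb{S}(\mathfrak{g})$, observe that its $s$-fibers are copies of the compact, simply connected $G$ (hence cohomologically $2$-connected), apply the Van Est isomorphism of Proposition \ref{vanishing_proposition}(1) to get $H^2(A_P;E)\cong H^2_{\mathrm{diff}}(G\ltimes\mathbb{S}(\mathfrak{g});E)$ for every representation $E$, and then use properness (compactness) via Proposition \ref{vanishing_proposition}(2) to conclude vanishing, after which Theorem \ref{Proposition_formal_equivalence} applies. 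Your plan points in this direction but leaves exactly this cohomological vanishing unestablished.
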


A special type of Poisson submanifolds are the \textbf{symplectic leaves} of $(M,\pi)$. A symplectic leaf of $M$ is a connected Poisson
submanifold $S$ of $M$, for which $\pi_{|S}$ is nondegenerate, and moreover $S$ is maximal with these properties. Any Poisson manifold carries a
natural singular foliation by symplectic leaves and $\omega_S:=\pi_{|S}^{-1}$ gives a symplectic structure on such an $S$. If
$(S,\omega_S)\subset (M,\pi)$ is an embedded symplectic leaf, then the Lie algebroid extension (\ref{shortexact2}) - which encodes only the
first order jet $\pi$ along $S$ - can be used to construct a second Poisson structure $\pi^1_S$, called the \textbf{first order approximation}
of $\pi$ around $S$. $\pi^1_S$ is defined on some open neighborhood of $S$ and has the same first jet as $\pi$ along $S$.

M.Crainic and the author have obtained in \cite{CrMa} a normal form theorem for Poisson structure around symplectic leaves, proving that under
appropriate conditions on the first jet of $\pi$ along $S$, $\pi$ and $\pi^1_S$ are Poisson diffeomorphic. The goal of this research is to give
a formal version of this result, which in its most general form we state below (observe that it is a direct consequence of Theorem
\ref{Proposition_formal_equivalence}).

\begin{theorem}\label{Theorem1}
Let $(M,\pi)$ be a Poisson manifold and $S\subset M$ an embedded symplectic leaf. If the cohomology groups
\[H^2(A_S,\mathcal{S}^{k}(TS^{\circ}))\]
vanish for all $k\geq 2$, then $\pi$ is formally Poisson diffeomorphic to its first order approximation around $S$.
\end{theorem}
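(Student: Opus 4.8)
The plan is to deduce Theorem~\ref{Theorem1} directly from Theorem~\ref{Proposition_formal_equivalence}, applied to the pair $\pi_1=\pi$ and $\pi_2=\pi^1_S$, the first order approximation of $\pi$ around $S$. First I would recall the defining properties of $\pi^1_S$ from the construction in section~\ref{The first order data}: it is a genuine Poisson structure defined on some open neighborhood of $S$, built out of the Lie algebroid extension \eqref{shortexact2} (equivalently, out of the first jet $j^1_{|S}\pi$), for which $S$ is again an embedded Poisson submanifold with $\pi^1_S|_S=\omega_S^{-1}$, and which satisfies $j^1_{|S}(\pi^1_S)=j^1_{|S}(\pi)$ by construction.

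Replacing $M$ by a tubular neighborhood of $S$ on which $\pi^1_S$ is defined — harmless, since the statement is local around $S$ and $S$ may then be assumed closed — I would then observe that $(\pi,\pi^1_S)$ meets all the hypotheses of Theorem~\ref{Proposition_formal_equivalence}: $S$ is an embedded Poisson submanifold for both structures, and the two have the same first jet along $S$. Since the Lie algebroid $A_S$ of \eqref{shortexact2} depends only on this first jet, it is the common algebroid of $\pi$ and $\pi^1_S$, so the assumption that $H^2(A_S,\mathcal{S}^{k}(TS^{\circ}))=0$ for all $k\geq 2$ is exactly the cohomological hypothesis required by that theorem. Applying Theorem~\ref{Proposition_formal_equivalence} then yields a diffeomorphism $\psi:\mathcal{U}\to\mathcal{V}$ between neighborhoods of $S$ with $\psi|_S=\mathrm{id}_S$ and $d\psi|_{T_SM}=\mathrm{id}_{T_SM}$ such that $\pi|_{\mathcal{U}}$ and $\psi^*(\pi^1_{S|\mathcal{V}})$ have the same infinite jet along $S$; this is precisely the assertion that $\pi$ is formally Poisson diffeomorphic to its first order approximation around $S$.

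The only point needing genuine verification, beyond invoking Theorem~\ref{Proposition_formal_equivalence}, is that $\pi^1_S$ is a legitimate input — that the data of \eqref{shortexact2} really does assemble into a Poisson structure near $S$ with the prescribed first jet and with $S$ as a symplectic leaf — which is the content of the construction of the first order approximation (and of \cite{CrMa}), and which I would simply cite. I expect no further obstacle here: all the analytic work — the order-by-order correction of the formal jets, in which the vanishing of $H^2(A_S,\mathcal{S}^{k}(TS^{\circ}))$ is used in each degree $k$ to remove the obstruction to matching the $k$-th order terms — is entirely absorbed into Theorem~\ref{Proposition_formal_equivalence}, so nothing remains to be done.
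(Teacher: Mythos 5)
Your proposal is correct and is exactly the paper's route: the author states Theorem \ref{Theorem1} as a direct consequence of Theorem \ref{Proposition_formal_equivalence}, obtained by taking $\pi_1=\pi$ and $\pi_2=\pi^1_S$, using that Vorobjev's construction produces $\pi^1_S$ on a neighborhood of $S$ with $S$ as a symplectic leaf and with the same first jet (equivalently, the same induced algebroid $A_S$) along $S$. Nothing further is needed.
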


In many cases we show that these cohomological obstructions vanish, and we obtain the following list of corollaries.

\begin{corollary}\label{Theorem2}
Let $(M,\pi)$ be a Poisson manifold and $S\subset M$ an embedded symplectic leaf. Assume that the Poisson homotopy cover of $S$ is a smooth
principal bundle with vanishing second DeRham cohomology group, and 
its structure group $G$ satisfies
\[H^2_{\mathrm{diff}}(G,\mathcal{S}^k(\mathfrak{g}))=0,\ \ (\forall)\  k\geq 2,\]
where $\mathfrak{g}$ is the Lie algebra of $G$, and $H^{*}_{\mathrm{diff}}(G,\mathcal{S}^k(\mathfrak{g}))$ denotes the differentiable cohomology
of $G$ with coefficients in the $k$-th symmetric power of the adjoint representation. Then $\pi$ is formally Poisson diffeomorphic to its first
order approximation around $S$.
\end{corollary}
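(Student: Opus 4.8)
The plan is to deduce Corollary \ref{Theorem2} from Theorem \ref{Theorem1}: it is enough to verify that the Lie algebroid cohomology groups $H^2(A_S;\mathcal S^k(TS^\circ))$ vanish for every $k\ge 2$, and the two hypotheses on the Poisson homotopy cover are precisely tailored to force this. First I would unwind the structure of $A_S$ over a symplectic leaf. Write $\widetilde P\to S$ for the Poisson homotopy cover, which by hypothesis is a smooth principal $G$-bundle. Then $A_S$ is canonically isomorphic to the gauge (Atiyah) algebroid $T\widetilde P/G$ of this bundle; the kernel of its anchor, $TS^\circ$, is the adjoint bundle $\widetilde P\times_G\mathfrak g$, and consequently $\mathcal S^k(TS^\circ)\cong \widetilde P\times_G\mathcal S^k(\mathfrak g)$ as representations of $A_S$, with $G$ acting on $\mathcal S^k(\mathfrak g)$ through the $k$-th symmetric power of the adjoint representation. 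This identification, together with the smoothness of $\widetilde P$, is where the hypotheses about the Poisson homotopy bundle enter; it is the algebroid incarnation of the normal form data of \cite{CrMa}.

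Next I would integrate and use a van Est argument. The gauge groupoid $\mathcal G_S:=(\widetilde P\times\widetilde P)/G$ over $S$ integrates $A_S$, is transitive with isotropy group $G$, and has all source fibers diffeomorphic to $\widetilde P$. Since $\widetilde P$ is $1$-connected (being a Poisson homotopy bundle) and $H^2_{\mathrm{dR}}(\widetilde P)=0$ by hypothesis, the van Est map
\[ \mathcal{VE}^{\,n}\colon H^n_{\mathrm{diff}}\bigl(\mathcal G_S;\ \widetilde P\times_G\mathcal S^k(\mathfrak g)\bigr)\ \longrightarrow\ H^n\bigl(A_S;\mathcal S^k(TS^\circ)\bigr) \]
is an isomorphism for $n\le 2$. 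Because $\mathcal G_S$ is a transitive Lie groupoid it is Morita equivalent to its isotropy group $G$, regarded as a groupoid over a point, and the bibundle $\widetilde P$ realizing this equivalence carries the representation $\widetilde P\times_G\mathcal S^k(\mathfrak g)$ to the $G$-representation $\mathcal S^k(\mathfrak g)$; by Morita invariance of differentiable groupoid cohomology with representation coefficients, $H^n_{\mathrm{diff}}(\mathcal G_S;\widetilde P\times_G\mathcal S^k\mathfrak g)\cong H^n_{\mathrm{diff}}(G;\mathcal S^k\mathfrak g)$. Combining the two, $H^2(A_S;\mathcal S^k(TS^\circ))\cong H^2_{\mathrm{diff}}(G;\mathcal S^k\mathfrak g)$, which is zero for all $k\ge 2$ by hypothesis, and Theorem \ref{Theorem1} then yields the conclusion.

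The hard part is the van Est comparison in degree two. As usually stated, van Est's theorem requires the source fibers of $\mathcal G_S$ to be $2$-connected, whereas the hypotheses only guarantee $1$-connectedness of $\widetilde P$ together with the vanishing of its real second cohomology (over $\mathbb Z$ the group $\pi_2(\widetilde P)$ could still be nontrivial torsion). One must therefore invoke the sharper, cohomological form of the statement, in which the obstruction to $\mathcal{VE}^{\,n}$ being an isomorphism lies in the real cohomology of the source fibers in degrees $\le n$ rather than in their homotopy groups; this comes out of the spectral sequence underlying the proof of the van Est isomorphism. A secondary, routine point is that although each $\mathcal S^k(TS^\circ)$ is an ordinary finite-rank vector bundle, both the van Est map and the Morita isomorphism must be checked to be natural in the coefficient representation, so that the single hypothesis $H^2_{\mathrm{diff}}(G,\mathcal S^k\mathfrak g)=0$, $k\ge 2$, can be applied uniformly in $k$.
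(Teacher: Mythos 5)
Your proposal is correct and follows essentially the same route as the paper: integrate $A_S$ by the gauge groupoid $P\times_G P$ of the Poisson homotopy cover, apply the van Est isomorphism (the paper's Proposition \ref{vanishing_proposition}~(1)) using that the $s$-fibers are diffeomorphic to $P$, reduce to the isotropy group $G$ by Morita invariance/transitivity (Proposition \ref{vanishing_proposition}~(3)), identify $T_xS^{\circ}\cong\mathfrak{g}$ as $G$-representations, and invoke Theorem \ref{Theorem1}. The subtlety you flag about $2$-connectedness is already handled exactly as you suggest: the van Est statement the paper quotes from \cite{Cra} requires the fibers to be \emph{cohomologically} $2$-connected, which is precisely what simple connectedness plus $H^2_{\mathrm{dR}}(P)=0$ provide.
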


Since the differentiable cohomology of compact groups vanishes, we obtain the following immediate corollary.
\begin{corollary}\label{Corollary_2}
Let $(M,\pi)$ be a Poisson manifold and $S\subset M$ an embedded symplectic leaf. If the Poisson homotopy cover of $S$ is a smooth principal
bundle with vanishing second DeRham cohomology group and compact structure group, then $\pi$ is formally Poisson diffeomorphic to its first
order approximation around $S$.
\end{corollary}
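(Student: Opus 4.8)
\medskip

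The plan is to deduce the statement directly from Corollary \ref{Theorem2}. Its hypotheses are that the Poisson homotopy cover of $S$ is a smooth principal bundle with vanishing second de Rham cohomology, and that its structure group $G$ satisfies $H^2_{\mathrm{diff}}(G,\mathcal{S}^k(\mathfrak{g}))=0$ for all $k\geq 2$. The first of these is assumed here verbatim, so it suffices to check that the compactness of $G$ automatically forces the second.

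For this I would invoke the classical vanishing of the differentiable group cohomology of a compact Lie group with coefficients in a finite-dimensional representation: if $G$ is compact and $V$ is finite-dimensional, then $H^n_{\mathrm{diff}}(G,V)=0$ for all $n\geq 1$. This can be proved by the usual averaging argument — integrating cochains against the normalized bi-invariant Haar measure on $G$ produces a contracting homotopy on the differentiable cochain complex in positive degrees — or deduced from the Van Est theorem, which for a compact group identifies $H^\bullet_{\mathrm{diff}}(G,V)$ with the relative Lie algebra cohomology $H^\bullet(\mathfrak{g},\mathfrak{g};V)$, a complex concentrated in degree $0$. Since $\mathcal{S}^k(\mathfrak{g})$, the $k$-th symmetric power of the adjoint representation, is a finite-dimensional representation of $G$, applying this with $V=\mathcal{S}^k(\mathfrak{g})$ gives $H^2_{\mathrm{diff}}(G,\mathcal{S}^k(\mathfrak{g}))=0$ for every $k$, in particular for all $k\geq 2$.

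Both hypotheses of Corollary \ref{Theorem2} are then in force, and it yields that $\pi$ is formally Poisson diffeomorphic to its first order approximation around $S$. There is essentially no obstacle in this argument: the only non-formal ingredient is the vanishing of the differentiable cohomology of a compact group recalled above, and everything else is a verbatim specialization of Corollary \ref{Theorem2}.
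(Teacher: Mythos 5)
Your argument is correct and coincides with the paper's own proof: both deduce the statement directly from Corollary \ref{Theorem2} using the vanishing of the differentiable cohomology of a compact Lie group with coefficients in the finite-dimensional representations $\mathcal{S}^k(\mathfrak{g})$ (the paper cites Proposition \ref{vanishing_proposition}(2) for this, which amounts to the same averaging argument you describe).
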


A bit more technical is the following.
\begin{corollary}\label{Theorem3}
Let $(M, \pi)$ be a Poisson manifold and $S\subset M$ be an embedded symplectic leaf whose isotropy Lie algebra is reductive. If the
abelianization algebroid
\[A_S^{\mathrm{ab}}:=A_S/[TS^{\circ},TS^{\circ}]\]
is integrable by a simply connected principal bundle with vanishing second DeRham cohomology group and compact structure group, then $\pi$ is
formally Poisson diffeomorphic to its first order approximation around $S$.
\end{corollary}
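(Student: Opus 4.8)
The plan is to reduce the statement to Theorem \ref{Theorem1}, that is, to the vanishing of $H^2(A_S;\mathcal{S}^k(TS^{\circ}))$ for all $k\geq 2$, and to extract this vanishing from the integrability hypothesis on the abelianization $A_S^{\mathrm{ab}}$. First I would exploit reductivity of the isotropy Lie algebra $\mathfrak{h}$: it gives the canonical decomposition $\mathfrak{h}=\mathfrak{z}(\mathfrak{h})\oplus[\mathfrak{h},\mathfrak{h}]$ into centre and semisimple part, and since both summands are characteristic, globalizing over $S$ yields an $A_S$-invariant splitting of the isotropy bundle, $TS^{\circ}=\mathfrak{z}\oplus\mathfrak{k}$, where $\mathfrak{k}=[TS^{\circ},TS^{\circ}]$ is a bundle of semisimple Lie algebras and $\mathfrak{z}$ is the isotropy bundle of $A_S^{\mathrm{ab}}=A_S/\mathfrak{k}$. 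Correspondingly one gets a direct sum decomposition of $A_S$-representations
\[\mathcal{S}^k(TS^{\circ})=\bigoplus_{i+j=k}\mathcal{S}^i(\mathfrak{z})\otimes\mathcal{S}^j(\mathfrak{k}),\]
so it suffices to prove $H^2\bigl(A_S;\mathcal{S}^i(\mathfrak{z})\otimes\mathcal{S}^j(\mathfrak{k})\bigr)=0$ for every $i+j=k\geq 2$.

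Next I would run the Hochschild--Serre-type spectral sequence of the Lie algebroid extension $0\to\mathfrak{k}\to A_S\to A_S^{\mathrm{ab}}\to 0$ with coefficients in $E_{i,j}:=\mathcal{S}^i(\mathfrak{z})\otimes\mathcal{S}^j(\mathfrak{k})$,
\[E_2^{p,q}=H^p\bigl(A_S^{\mathrm{ab}};\mathcal{H}^q(\mathfrak{k};E_{i,j})\bigr)\ \Longrightarrow\ H^{p+q}(A_S;E_{i,j}).\]
Since $\mathfrak{k}$ acts trivially on $\mathfrak{z}$, the fibrewise cohomology is $\mathcal{H}^q(\mathfrak{k};E_{i,j})=\mathcal{S}^i(\mathfrak{z})\otimes\mathcal{H}^q\bigl(\mathfrak{k};\mathcal{S}^j(\mathfrak{k})\bigr)$, and here the first and second Whitehead lemmas, applied fibrewise to the semisimple Lie algebras $\mathfrak{k}_x$ (with the finite-dimensional coefficients $\mathcal{S}^j(\mathfrak{k}_x)$, including $j=0$), give $\mathcal{H}^1(\mathfrak{k};\mathcal{S}^j(\mathfrak{k}))=\mathcal{H}^2(\mathfrak{k};\mathcal{S}^j(\mathfrak{k}))=0$. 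Hence the rows $q=1,2$ of $E_2$ vanish, all differentials into and out of the corner $E_r^{2,0}$ are forced to be zero, and the filtration of $H^2$ collapses to give
\[H^2(A_S;E_{i,j})\ \cong\ E_2^{2,0}\ =\ H^2\bigl(A_S^{\mathrm{ab}};\,\mathcal{S}^i(\mathfrak{z})\otimes(\mathcal{S}^j\mathfrak{k})^{\mathfrak{k}}\bigr),\]
where $(\mathcal{S}^j\mathfrak{k})^{\mathfrak{k}}$ denotes the bundle of fibrewise invariant polynomials, which is a genuine representation of $A_S^{\mathrm{ab}}$ since $\mathfrak{k}$ is an ideal in $A_S$ and acts trivially on its own invariants.

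It then remains to show that $H^2(A_S^{\mathrm{ab}};W)=0$ for an \emph{arbitrary} representation $W$ of $A_S^{\mathrm{ab}}$, and this is exactly the point at which the integrability assumption enters, by the same mechanism underlying Corollaries \ref{Theorem2} and \ref{Corollary_2}. Let $\mathrm{Gauge}(P^{\mathrm{ab}})$ over $S$ be the gauge groupoid of the simply connected principal $K$-bundle $P^{\mathrm{ab}}\to S$ integrating $A_S^{\mathrm{ab}}$; its source fibres are diffeomorphic to $P^{\mathrm{ab}}$, and since $P^{\mathrm{ab}}$ is simply connected with $H^2_{\mathrm{dR}}(P^{\mathrm{ab}})=0$ they have vanishing real cohomology in degrees $1$ and $2$, so the van Est map $H^2_{\mathrm{diff}}(\mathrm{Gauge}(P^{\mathrm{ab}});W)\to H^2(A_S^{\mathrm{ab}};W)$ is an isomorphism. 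By Morita invariance of differentiable groupoid cohomology, $H^2_{\mathrm{diff}}(\mathrm{Gauge}(P^{\mathrm{ab}});W)\cong H^2_{\mathrm{diff}}(K;W_{x_0})$, which vanishes because $K$ is compact (Haar averaging). Feeding this back through the two previous steps gives $H^2(A_S;\mathcal{S}^k(TS^{\circ}))=0$ for all $k\geq 2$, and Theorem \ref{Theorem1} then yields the conclusion. I expect the main obstacle to be the second step: justifying that the spectral sequence of the non-transitive extension $0\to\mathfrak{k}\to A_S\to A_S^{\mathrm{ab}}\to 0$ exists with the stated $E_2$-page built from fibrewise-cohomology bundles and degenerates as claimed — in particular that the fibrewise Whitehead vanishing genuinely annihilates every relevant differential at the spot $E^{2,0}$ — and, secondarily, isolating the precise connectivity hypothesis under which the degree-$2$ van Est isomorphism holds with nontrivial coefficients.
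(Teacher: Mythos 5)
Your proposal is correct and follows essentially the same route as the paper: the global splitting $TS^{\circ}=\mathfrak{z}\oplus[\mathfrak{g},\mathfrak{g}]$ from reductivity, the Hochschild--Serre spectral sequence of the extension $0\to[\mathfrak{g},\mathfrak{g}]\to A_S\to A_S^{\mathrm{ab}}\to 0$ killed in rows $q=1,2$ by the fibrewise Whitehead lemmas, and then van Est, Morita invariance and properness (Proposition \ref{vanishing_proposition}) applied to the gauge groupoid of $P^{\mathrm{ab}}$, followed by Theorem \ref{Theorem1}. The only cosmetic difference is your extra decomposition $\mathcal{S}^k(TS^{\circ})\cong\bigoplus_{i+j=k}\mathcal{S}^i(\mathfrak{z})\otimes\mathcal{S}^j(\mathfrak{k})$, which the paper avoids by keeping $\mathcal{S}^k(TS^{\circ})$ intact and landing directly on $\mathcal{S}^k(\mathfrak{g})^{\mathfrak{s}}$; the spectral-sequence existence you worry about is covered by the extension spectral sequence for Lie algebroids in Mackenzie's book, as cited in the paper.
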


\begin{corollary}\label{Corollary_3}
Let $(M, \pi)$ be a Poisson manifold and $S\subset M$ be an embedded symplectic leaf through $x\in M$. If the isotropy Lie algebra at $x$ is
semi-simple, $\pi_1(S,x)$ is finite and $\pi_2(S,x)$ is torsion, then $\pi$ is formally Poisson diffeomorphic to its first order approximation
around $S$.
\end{corollary}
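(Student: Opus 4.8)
The plan is to deduce the statement from Corollary~\ref{Theorem3}. Its first hypothesis holds for free, since a semi-simple Lie algebra is in particular reductive. The remaining task is to identify the abelianization algebroid $A_S^{\mathrm{ab}}$ and to produce an integration of it of the required kind.

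I would begin with the observation that $TS^{\circ}=\ker(\rho_{A_S})\subset A_S$ is, as always for the kernel of an anchor, a bundle of Lie algebras: the bracket of $A_S$ restricted to sections of $TS^{\circ}$ is $C^{\infty}(S)$-bilinear, hence given by a vector-bundle morphism $\Lambda^2 TS^{\circ}\to TS^{\circ}$. Over the symplectic leaf $S$ all fibers of this bundle are isomorphic to the isotropy Lie algebra $\mathfrak{g}_x$, which is semi-simple and therefore perfect: $[\mathfrak{g}_x,\mathfrak{g}_x]=\mathfrak{g}_x$. A vector-bundle morphism that is fiberwise surjective has image all of $TS^{\circ}$, so $[TS^{\circ},TS^{\circ}]=TS^{\circ}$ and hence
\[A_S^{\mathrm{ab}}=A_S/[TS^{\circ},TS^{\circ}]=A_S/TS^{\circ}\cong T^*S,\]
the last identification being the quotient in the extension~(\ref{shortexact2}). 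Since $(S,\omega_S)$ is symplectic, the anchor $\pi_S^{\#}\colon T^*S\to TS$ is a vector-bundle isomorphism, and it intertwines the Koszul bracket with the bracket of vector fields (it is a Lie algebroid morphism on exact one-forms, which generate $\Omega^1(S)$ over $C^{\infty}(S)$, hence on all of $T^*S$). Thus $A_S^{\mathrm{ab}}\cong TS$ as Lie algebroids.

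It then remains to integrate $TS$, and for this I would use the universal cover: with $Q:=\widetilde{S}$ and $G:=\pi_1(S,x)$ acting by deck transformations, $Q\to S$ is a principal $G$-bundle, and since $G$ is discrete its adjoint bundle is zero, so the Atiyah algebroid $TQ/G$ equals $TS$. The group $G=\pi_1(S,x)$ is finite by hypothesis, hence compact, and $Q=\widetilde{S}$ is simply connected. Finally, $H^2_{\mathrm{dR}}(\widetilde{S})=0$: a covering is a fibration with discrete fiber, so $\pi_2(\widetilde{S})\cong\pi_2(S,x)$ is torsion; by Hurewicz $H_2(\widetilde{S};\mathbb{Z})\cong\pi_2(\widetilde{S})$ is torsion, whence $H_2(\widetilde{S};\mathbb{R})=0$ and $H^2_{\mathrm{dR}}(\widetilde{S})\cong H^2(\widetilde{S};\mathbb{R})=0$. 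All hypotheses of Corollary~\ref{Theorem3} are now met, and it gives the conclusion.

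I do not expect a serious obstacle here: once the earlier corollary is available the argument is a chain of identifications. The point deserving care is the equality $[TS^{\circ},TS^{\circ}]=TS^{\circ}$ of subbundles, which relies on fiberwise perfectness together with the constancy of the fiber type along the leaf; the only input from outside Lie-algebroid theory is the elementary homotopy computation turning ``$\pi_1$ finite and $\pi_2$ torsion'' into the vanishing of $H^2_{\mathrm{dR}}$ of the universal cover.
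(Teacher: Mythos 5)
Your proposal is correct and follows essentially the same route as the paper: reduce to Corollary~\ref{Theorem3}, identify $A_S^{\mathrm{ab}}$ with $TS$ (the paper states this directly from $[\mathfrak{g},\mathfrak{g}]=\mathfrak{g}$ in the semi-simple case, while you spell it out via perfectness of the fibers and the symplectic identification $T^*S\cong TS$), integrate $TS$ by the universal cover $\widetilde{S}$ with finite (hence compact) structure group $\pi_1(S,x)$, and use Hurewicz to get $H^2_{\mathrm{dR}}(\widetilde{S})=0$ from $\pi_2(S,x)$ being torsion. The extra details you supply are exactly the content the paper leaves implicit, so there is nothing to correct.
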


\subsection*{Some related results}
The first order approximation of a Poisson manifold $(M,\pi)$ around a one-point leaf $x$ (a zero of $\pi$), is the linear Lie-Poisson structure
on $\mathfrak{g}_x^*$, the dual of the isotropy Lie algebra at $x$. Formal linearization in this setup was proven by Weinstein in \cite{Wein}
for semi-simple $\mathfrak{g}_x$. This case is covered also by our Corollary \ref{Corollary_3}. Under the stronger assumption that
$\mathfrak{g}_x$ is semi-simple of compact type, J.Conn proves in \cite{Conn} that a neighborhood of $x$ is in fact Poisson diffeomorphic to an
open neighborhood of $0$ in the local model $\mathfrak{g}_x^*$.

The first order approximation around arbitrary symplectic leaves was constructed by Vorobjev in \cite{Vorobjev} (we recommend also \cite{CrMa}
for a more geometrical approach).

A weaker version of our Theorem \ref{Proposition_formal_equivalence} - of which we become aware only at the end of this research - was stated in
\cite{VorCo}. Instead of embedded Poisson submanifolds, the authors of \cite{VorCo} work around compact symplectic leafs and also their
conclusion is a bit weaker, they prove for each $k$ existence of a diffeomorphism which identifies the Poisson structures up to order $k$ (see
Theorem 7.1 in \cite{VorCo}).
Compactness of the leaf is nevertheless a too strong assumption of formal equivalence. For example in Corollary 7.4 in \cite{VorCo}, the authors
conclude that hypotheses similar to those in our Corollary \ref{Corollary_3} imply the vanishing to the cohomology groups
$H^2(A_S,\mathcal{S}^{k}(TS^{\circ}))$, but on the other hand they also remark that compactness of the leaf is incompatible with these
assumptions (they force $S$ to be a point).

To prove Theorem \ref{Proposition_formal_equivalence} we first reduce it to a result on equivalence of Maurer Cartan elements in complete graded
Lie algebras, whose proof we give in the Appendix. The same criteria for equivalence of Maurer Cartan elements, but in the context of
differential graded algebras can be found in the Appendix A of \cite{CMB}.

To prove vanishing of the cohomological obstructions - and obtain the list of corollaries enumerated above - we use techniques like Whitehead's
Lemma for semi-simple Lie algebras, spectral sequences for Lie algebroids, but also the more powerful techniques developed in \cite{Cra} such as
Van Est isomorphism and vanishing of cohomology of proper groupoids.

The main result of \cite{CrMa} is the following normal form theorem:\\

\noindent\emph{Let $(M,\pi)$ be a Poisson manifold and $S\subset M$ an embedded symplectic leaf. If the following conditions are satisfied
\begin{itemize}
\item the Poisson homotopy cover $P$ of $S$ is smooth,
\item $H^2_{\mathrm{dR}}(P)=0$,
\item the structure group of $P$ is compact,
\item $S$ is compact,
\end{itemize}
then $\pi$ is Poisson diffeomorphic to its first order approximation around $S$.}\\

The first 3 conditions are the hypothesis of our Corollary \ref{Corollary_2}. So, giving up on compactness of the leaf, we still can conclude
that $\pi$ and its first order approximation are \textbf{formally} Poisson diffeomorphic. Nevertheless, the conditions of Corollary
\ref{Corollary_2} are too strong in the formal setting, they force the semi-simple part of the isotropy Lie algebra to be compact. This is why
the author considers the more technical Corollary \ref{Theorem3} to be the correct analog in the category of formal power series of the normal
form theorem in \cite{CrMa}. In fact Corollary \ref{Corollary_2} follows also from Corollary \ref{Theorem3}, it is precisely the case when the
semi-simple part of the isotropy Lie algebra is compact.\\

%

\noindent \textbf{Acknowledgments.} I would like to thank Marius Crainic for his very useful suggestions and comments.\\

\noindent This research was supported by the NWO Vidi Project ``Poisson topology''.

\section{The first order data}\label{The first order data}

We start by recalling some definitions, for more on Lie algebroids see \cite{Mackenzie}.
\begin{definitions}
A \textbf{Lie algebroid} over a manifold $B$ is a vector bundle $\mathcal{A}\to B$ endowed with a Lie bracket $[\cdot, \cdot]$ on its space of
sections $\Gamma(\mathcal{A})$ and a vector bundle map $\rho: \mathcal{A}\to TB$, called the \textbf{anchor}, which satisfy the Leibniz
identity:
\[ [\alpha, f\beta]= f[\alpha, \beta]+ \mathcal{L}_{\rho(\alpha)}(f)\beta\ \ \ (\forall) \ f\in C^{\infty}(B), \ \alpha, \beta \in \Gamma(\mathcal{A}).\]

A \textbf{representation} of $\mathcal{A}$ is a vector bundle $E\to B$ endowed  with a bilinear map
\[\nabla:\Gamma(\mathcal{A})\times\Gamma(E)\to \Gamma(E),\]
satisfying
\[\nabla_{f\alpha}(s)=f\nabla_{\alpha}(s),\ \ \nabla_{\alpha}(fs)=f\nabla_s(\alpha)+\mathcal{L}_{\rho(\alpha)}(f)s,\]
and the flatness condition
\[\nabla_{\alpha}\nabla_{\beta}(s)-\nabla_{\beta}\nabla_{\alpha}(s)=\nabla_{[{\alpha},{\beta}]}(s).\]

 The \textbf{cohomology of an algebroid}
$(\mathcal{A},[\cdot,\cdot],\rho)$ \textbf{with coefficients in a representation} $(E,\nabla)$ is defined by the complex
$\Omega^{\bullet}(\mathcal{A},E):=\Gamma(\Lambda^{\bullet}\mathcal{A}^*\otimes E)$ with differential given by the classical Koszul formula:
\begin{align*}
d_{\nabla}\omega(\alpha_0, \ldots , \alpha_{q})=&\sum_{i}(-1)^{i} \nabla_{\alpha_i}(\omega(\alpha_1, \ldots , \widehat{\alpha}_i, \ldots , \alpha_{q}))+\\
& + \sum_{i< j} (-1)^{i+j}\omega([\alpha_i, \alpha_j], \ldots , \widehat{\alpha}_i, \ldots, \widehat{\alpha}_j, \ldots , \alpha_{q}).
\end{align*}
The corresponding cohomology groups will be denoted by $H^{\bullet}(\mathcal{A},E)$.
\end{definitions}

For any Poisson manifold $(M,\pi)$ there is an associated algebroid structure on the cotangent bundle $T^*M$, with anchor given by
$\pi^{\sharp}:T^*M\to TM$ and bracket uniquely determined by (for details see \cite{Izu})
\[[df,dg]:=d\{f,g\},\ \  (\forall)f,g\in C^{\infty}(M).\]
Let $P\subset M$ be an embedded Poisson submanifold. Since $\pi$ is tangent to $P$ it is easy to see that the algebroid structure can be
restricted to $P$, in the sense that there is a unique algebroid structure on $A_P:=T^*_PM$ with anchor $\pi^{\sharp}_{|P}$
and bracket such that the restriction map $\Gamma(T^*M)\to \Gamma(A_P)$ is a Lie algebra homomorphism. The dual of the inclusion $TP\subset
T_PM$ gives a map $A_P\to T^*P$ which is a Lie algebroid homomorphism, where $T^*P$ is the cotangent algebroid of $(P,\pi_{|P})$. This way we
obtain the extension of Lie algebroids from the introduction:
\begin{equation}\label{shortexact_alg}
0\to (TP^{\circ},[\cdot,\cdot])\to (A_P,[\cdot,\cdot]) \to (T^*P,[\cdot,\cdot])\to 0.
\end{equation}
This short exact sequence shows also that $TP^{\circ}$ is an ideal in $(A_P,[\cdot,\cdot])$, therefore
\[\nabla:\Gamma(A_P)\times \Gamma(TP^{\circ})\to \Gamma(TP^{\circ}), \ \ \nabla_{\alpha}(\eta):=[\alpha,\eta],\]
defines a representation of $A_P$ on $TP^{\circ}$, and thus on its symmetric powers $\mathcal{S}^{k}(TP^{\circ})$. The resulting cohomology
groups are the obstructions appearing in Theorems \ref{Proposition_formal_equivalence} and \ref{Theorem1}. The algebroid structures on $A_P$ and
the sequence (\ref{shortexact_alg}) depend only on the first jet of $\pi$ along $P$ (i.e. the brackets and anchors can be expressed in terms of
$\pi_{|P}$ and the first order derivatives of $\pi$ restricted to $P$).
\begin{remark}\label{Proposition_first_order_data}
We regard the algebroid $A_P$ as the first order approximation of the Poisson bracket at $P$. To justify this interpretation, fix a Poisson
structure $\pi_P$ on $P$, where $P\subset M$ a closed embedded submanifold. Then there is a one to one correspondence between Poisson algebra
structures on the commutative algebra $C^{\infty}(M)/I^2(P)$, which fit into the short exact sequence
\begin{equation}\label{sh_ex_Poi}
0\to (I(P)/I^{2}(P),\{\cdot,\cdot\})\to(C^{\infty}(M)/I^{2}(P),\{\cdot,\cdot\})\to (C^{\infty}(P),\{\cdot,\cdot\}) \to 0,
\end{equation}
and algebroid structures on $A_P:=T^*_PM$ which fit into a sequence of the form
\begin{equation}\label{sh_ex_alg}
0\to (TP^{\circ},[\cdot,\cdot])\to (A_P,[\cdot,\cdot]) \to (T^*P,[\cdot,\cdot])\to 0.
\end{equation}
The exterior derivative induces a map
\[d: C^{\infty}(M)/I^2(P)\to \Gamma(A_P),\]
and the correspondence between the brackets is uniquely determine by the fact that this is a Lie algebra homomorphism.
\end{remark}

\begin{example}
Consider $P:=\mathbb{R}^2$ as the submanifold $\{z=0\}\subset M:=\mathbb{R}^3$. We construct a first order extension of the trivial Poisson
structure on $P$ to $M$, i.e. a Poisson algebra structure on the commutative algebra
\[C^{\infty}(M)/I(P)^2=C^{\infty}(M)/(z^2)=C^{\infty}(P)\oplus z C^{\infty}(P)\]
with the property that $\{f,g\}\in (z)$, for all $f,g\in C^{\infty}(M)/(z^2)$. Explicitly, define
\begin{align*}
\{f,g\}&=z(\frac{\partial{f}}{\partial x}\frac{\partial{g}}{\partial y}-\frac{\partial{f}}{\partial y}\frac{\partial{g}}{\partial
x}+x\frac{\partial{f}}{\partial x}\frac{\partial{g}}{\partial z}-x\frac{\partial{f}}{\partial z}\frac{\partial{g}}{\partial x}) \ modulo \
(z^2).
\end{align*}
A straightforward computation yields that $\{\cdot,\cdot\}$ satisfies the Jacobi identity 
and thus we have an extension of Poisson algebras
\[0\to z C^{\infty}(P) \to C^{\infty}(P)\oplus z C^{\infty}(P) \to C^{\infty}(P)\to 0,\]
where the Poisson bracket on $P$ is zero. The corresponding algebroid is on $A_P=\mathbb{R}^3\times P\to P$, it has zero anchor and bracket
defined by
\[[dx_{|P},dy_{|P}]=dz_{|P}, \ \ [dy_{|P},dz_{|P}]=0,\ \ [dx_{|P},dz_{|P}]=xdz_{|P},\]
and extended bi-linearly to all sections, since the anchor is trivial.

Nevertheless, there is no Poisson structure on $M$ (nor on any open neighborhood of $P$), for which this bracket is the first order
approximation. Assume on the contrary that on some open neighborhood $\mathcal{U}$ of $P$ such a Poisson structure exists. Then is must have the
following form:
\[\{x,y\}=z+z^2h,\ \ \{y,z\}=z^2k, \ \ \{x,z\}=xz+z^2l,\]
for some smooth functions $h,k,l$ defined on $\mathcal{U}$. Computing the Jacobiator of $x$, $y$, $z$, we obtain
\[J=\{x,\{y,z\}\}+\{z,\{x,y\}\}+\{y,\{z,x\}\}=z^2((2-x)k(x,y,0)+1)+z^3a(x,y,z),\]
where $a$ is a smooth function. In particular we see that $J$ cannot vanish since:
\[\frac{\partial^2J}{\partial z^2}(2,y,0)=2\neq 0.\]
\end{example}
The example shows that not everything that looks like the first jet of a Poisson structure around $P$ (i.e. an extension of the form
(\ref{sh_ex_alg}) or (\ref{sh_ex_Poi})) comes from an actual Poisson structure.

On the other hand if $P$ is a symplectic manifold, the situation changes for better, namely every "first jet" of a Poisson structure can be
extended to a Poisson structure around $P$. More precisely, consider $(S,\omega_S)$ a symplectic manifold, with $S\subset M$ embedded, and an
algebroid structure on $A_S:=T^*_SM$ which fits into an exact sequence of the form
\[0\to  TS^{\circ} \to A_S \to T^*S \to 0.\]
Then, using a tubular neighborhood $\mathcal{E}:T_SM/TS\to M$, one can construct a Poisson structure $\pi_S^1=\pi_S^1(A_S,\omega_S,\mathcal{E})$
on some open neighborhood of $S$, from which we recover the first order data, i.e.\ it has $(S,\omega_S)$ as a symplectic leaf and the algebroid
structure induced on $T^*_SM$ is $A_S$. This Poisson structure was first constructed by Vorobjev, the reader can find this construction in
\cite{Vorobjev} and we also recommend \cite{CrMa} for some different approaches. The construction applied to different tubular neighborhoods
produces Poisson structures which, when restricted to small enough neighborhoods of $S$, are Poisson diffeomorphic (see \cite{Vorobjev}). So the
isomorphism class of the germ around $S$ of $\pi_S^1$ doesn't depend on $\mathcal{E}$.

We can view the whole story from a different perspective, namely start with a Poisson structure $\pi$ on $M$, for which $(S,\omega_S)$ is an
embedded symplectic leaf, and denote as usually by $A_S$ the algebroid on $T^*_SM$. For $\mathcal{E}$ a tubular neighborhood of $S$, we will
call $\pi_S^1=\pi_S^1(A_S,\omega_S,\mathcal{E})$ \textbf{the first order approximation} of $\pi$ around $S$, and is defined on some open $U$
with $S\subset U$. The first order approximation plays the role of a normal form for $\pi$ around $S$.

\section{The formal equivalence Theorem}\label{Formal equivalence of Poisson structures around symplectic
leaves}

\subsection*{The algebra of formal vector fields}


Consider the graded Lie algebra of multi-vector fields on $M$, $(\mathfrak{X}^{\bullet}(M),[\cdot,\cdot])$ with Lie bracket the
Nijenhuis-Schouten bracket and $deg(W)=k-1$ for $W\in \mathfrak{X}^{k}(M)$. For a closed, embedded submanifold $P\subset M$, denote by
$\mathfrak{X}^{\bullet}_P(M)$ the subalgebra of multi-vector fields tangent to $P$
\[\mathfrak{X}^{\bullet}_P(M):=\{u\in \mathfrak{X}^{\bullet}(M)|u_{|P}\in \mathfrak{X}^{\bullet}(P)\}.\]
The vanishing ideal of $P$, $I(P)\subset C^{\infty}(M)$ induces a filtration $\mathcal{F}$ on $\mathfrak{X}^{\bullet}_P(M)$:
\[\mathfrak{X}^{\bullet}_P(M)\supset \mathcal{F}^{\bullet}_0\supset\mathcal{F}^{\bullet}_1\supset\ldots \mathcal{F}^{\bullet}_k\supset\mathcal{F}^{\bullet}_{k+1}\supset\ldots.\]
\[\mathcal{F}^{\bullet}_k=I^{k+1}(P)\mathfrak{X}^{\bullet}(M),\ \   k\geq 0.\]
It is readily checked that
\begin{equation}\label{Filtration_bracket}
[\mathcal{F}_k,\mathcal{F}_l]\subset \mathcal{F}_{k+l},\ \ [\mathfrak{X}_P^{\bullet}(M),\mathcal{F}_k]\subset \mathcal{F}_k.
\end{equation}
Let $\hat{\mathfrak{X}}^{\bullet}_P(M)$ be the completion of $\mathfrak{X}^{\bullet}_P(M)$ with respect to the filtration $\mathcal{F}$, defined
by the projective limit
\[\hat{\mathfrak{X}}^{\bullet}_P(M):=\varprojlim \mathfrak{X}^{\bullet}_P(M)/\mathcal{F}^{\bullet}_{k}.\]
By (\ref{Filtration_bracket}) it follows that $\hat{\mathfrak{X}}^{\bullet}_P(M)$ inherits a graded Lie algebra structure, such that the natural
maps
\[ j^{k}_{|P}:\hat{\mathfrak{X}}^{\bullet}_P(M)\to \mathfrak{X}^{\bullet}_P(M)/\mathcal{F}^{\bullet}_k,\textrm{ for }k\geq 0\]
are Lie algebra homomorphisms. The algebra $(\hat{\mathfrak{X}}^{\bullet}_P(M),[\cdot,\cdot])$ will be called \textbf{the algebra of formal
multi-vector fields along $P$}. Consider also the homomorphism
\[j^{\infty}_{|P}:\mathfrak{X}^{\bullet}_P(M)\to \hat{\mathfrak{X}}^{\bullet}_P(M).\]
From a version of Borel's Theorem (see for example \cite{Moerdijk}) about existence of smooth section with a specified infinite jet along a
submanifold, it follows that $j^{\infty}_{|P}$ is surjective. Observe that $\hat{\mathfrak{X}}^{\bullet}_P(M)$ inherits a filtration
$\hat{\mathcal{F}}$ from $\mathfrak{X}^{\bullet}_P(M)$, given by
\[\hat{\mathcal{F}}^{\bullet}_{k}=j^{\infty}_{|P}\mathcal{F}^{\bullet}_{k},\]
and which satisfies the corresponding equations (\ref{Filtration_bracket}).

The adjoint action of an element $X\in\hat{\mathcal{F}}_1^1$
\[ad_X: \hat{\mathfrak{X}}^{\bullet}_P(M)\to \hat{\mathfrak{X}}^{\bullet}_P(M),\ \ ad_X(Y):=[X,Y]\]
increases the degree of the filtration by 1. Therefore the partial sums
\[\sum_{i=0}^n\frac{ad_{X}^i}{i!}(Y)\]
are constant modulo $\hat{\mathcal{F}}_k$ for $n\geq k$ and all $Y\in\hat{\mathfrak{X}}^{\bullet}_P(M)$. This and the completeness of the
filtration on $\hat{\mathcal{F}}$ show that the exponential of $ad_X$
\[e^{ad_X}:\hat{\mathfrak{X}}^{\bullet}_P(M)\to \hat{\mathfrak{X}}^{\bullet}_P(M),\ \ e^{ad_{X}}(Y):=\sum_{n\geq 0} \frac{ad_{X}^n}{n!}(Y)\]
is well defined. It is readily checked that $e^{ad_X}$ is a graded Lie algebra isomorphism with inverse $e^{-ad_X}$ and that it preserves the
filtration. We will need the following geometric interpretation of these isomorphisms.
\begin{lemma}\label{gauge_real}
For every $X\in\hat{\mathcal{F}}_1^1$, there exists $\psi:M\to M$ a diffeomorphism of $M$, with $\psi_{|P}=id_P$ and $d\psi_{|P}=id_{T_PM}$,
such that for every $W\in \mathfrak{X}^{\bullet}_P(M)$, we have that
\[j^{\infty}_{|P}(\psi^*(W))=e^{ad_X}(j^{\infty}_{|P}(W)).\]
\end{lemma}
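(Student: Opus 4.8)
\emph{Proof proposal.} The plan is to realise $X$ by an honest vector field on $M$ and take $\psi$ to be its time-one flow. Since $j^\infty_{|P}\colon\mathfrak{X}^1_P(M)\to\hat{\mathfrak{X}}^1_P(M)$ is surjective (Borel's theorem), choose $\widetilde{X}\in\mathcal{F}^1_1=I^2(P)\mathfrak{X}^1(M)$ with $j^\infty_{|P}(\widetilde{X})=X$. As $\widetilde X$ vanishes on $P$, every point of $P$ is stationary, so the (open) flow domain of $\widetilde X$ contains $\mathbb{R}\times P$; hence for a suitable open neighbourhood $\mathcal U\supset P$ the flow $\varphi_t$ of $\widetilde X$ is defined on $\mathcal U$ for all $t\in[0,1]$, and $\psi:=\varphi_1\colon\mathcal U\to M$ is a diffeomorphism onto a neighbourhood of $P$. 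Only the germ of $\psi$ around $P$ enters the statement, so one may extend $\psi$ to a diffeomorphism of $M$ (e.g. after cutting $\widetilde X$ off outside a tubular neighbourhood of $P$, which does not change $j^\infty_{|P}(\widetilde X)$). Because $\widetilde X\in I^2(P)\mathfrak{X}^1(M)$, its $1$-jet along $P$ vanishes; thus $\psi_{|P}=\mathrm{id}_P$, and the linearisation of $\varphi_1$ along $P$ is trivial, i.e. $d\psi_{|T_PM}=\mathrm{id}_{T_PM}$.

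Next I observe that $\psi^*$ acts on the completed algebra. Since $\psi_{|P}=\mathrm{id}_P$, the map $\psi$ preserves $P$, hence the ideal $I(P)$ and all its powers, hence the filtration $\mathcal{F}$ and the subalgebra $\mathfrak{X}^\bullet_P(M)$; therefore $\psi^*$ descends to every quotient $\mathfrak{X}^\bullet_P(M)/\mathcal{F}^\bullet_k$ and to $\hat{\mathfrak{X}}^\bullet_P(M)$, commuting with $j^\infty_{|P}$. Now fix $W\in\mathfrak{X}^\bullet_P(M)$ and set $\hat{c}(t):=j^\infty_{|P}(\varphi_t^*W)\in\hat{\mathfrak{X}}^\bullet_P(M)$ (a smooth path, since modulo each $\mathcal{F}^\bullet_k$ it depends on finitely many derivatives of $\varphi_t^*W$ along $P$), with $\hat c(0)=j^\infty_{|P}(W)$. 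Using the standard identity $\frac{d}{dt}\varphi_t^*W=\varphi_t^*\mathcal{L}_{\widetilde X}W$, that $\mathcal{L}_{\widetilde X}$ equals the Schouten bracket $[\widetilde X,\,\cdot\,]$ on multi-vector fields, that $\varphi_t^*$ is a bracket homomorphism with $\varphi_t^*\widetilde X=\widetilde X$, and that $j^\infty_{|P}$ is a graded Lie algebra homomorphism with $j^\infty_{|P}(\widetilde X)=X$, we obtain
\[
\frac{d}{dt}\hat c(t)=j^\infty_{|P}\big([\widetilde X,\varphi_t^*W]\big)=[X,\hat c(t)]=ad_X(\hat c(t)),\qquad \hat c(0)=j^\infty_{|P}(W).
\]

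It remains to identify the solution of this linear ODE at $t=1$ with $e^{ad_X}(j^\infty_{|P}(W))$. As noted above, $ad_X$ raises the filtration degree by one, so $ad_X^{k}$ maps $\hat{\mathfrak{X}}^\bullet_P(M)$ into $\hat{\mathcal{F}}_k$ for every $k$. Hence $t\mapsto e^{t\,ad_X}(j^\infty_{|P}(W))=\sum_{n\ge0}\tfrac{t^n}{n!}ad_X^n(j^\infty_{|P}(W))$ converges in the complete filtration, depends smoothly on $t$, and — differentiating modulo each $\hat{\mathcal{F}}_k$, where the sum is finite — solves the same ODE with the same initial value. Two solutions differ by a path $\delta$ with $\delta(0)=0$ and $\delta(t)=\int_0^t ad_X(\delta(s))\,ds$; since $ad_X$ increases the filtration degree, an easy induction on $j$ gives $\delta(t)\in\hat{\mathcal{F}}_j$ for every $j$, so $\delta\equiv0$ by completeness. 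Therefore $\hat c(1)=e^{ad_X}(j^\infty_{|P}(W))$, i.e. $j^\infty_{|P}(\psi^*W)=e^{ad_X}(j^\infty_{|P}(W))$, which is the claim.

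The only genuinely delicate point is the first step, namely replacing the formal element $X$ by an actual vector field with a flow one can control near $P$; once that is done, the heart of the argument is merely the observation that $ad_X$ is pro-nilpotent on the completed algebra, which forces the solution of $\hat c'=ad_X\hat c$ to be $e^{t\,ad_X}\hat c(0)$.
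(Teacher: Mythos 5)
Your overall route is the same as the paper's: realise $X$ as the infinite jet of an actual vector field vanishing to second order along $P$, take its time-one flow $\psi$, and identify conjugation by $\psi$ with $e^{ad_X}$ on infinite jets. The second half of your argument is fine: where the paper avoids formal ODEs by showing that the partial sums $\sum_{i=0}^k\frac{(-s)^i ad_{V}^i}{i!}(\mathrm{Fl}_{sV}^{*}W)$ are constant modulo $\mathcal{F}_{k+1}$ and then letting $k\to\infty$, you solve $\hat c'=ad_X\hat c$ in the completed algebra and get uniqueness from pro-nilpotency of $ad_X$; read modulo each $\hat{\mathcal{F}}_k$ this is a legitimate, essentially equivalent variant.

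The genuine gap is in producing a diffeomorphism of all of $M$, which is what the lemma asserts. Your local step (the flow of $\widetilde X$ is defined up to time $1$ on some neighbourhood $\mathcal U$ of $P$, giving a diffeomorphism of $\mathcal U$ onto a neighbourhood of $P$) is correct, but the upgrade to a global $\psi$ is not: a diffeomorphism between neighbourhoods of $P$ fixing $P$ need not extend to $M$, and your proposed remedy --- cutting $\widetilde X$ off outside a tubular neighbourhood of $P$ --- does not make the field complete. For instance, take $M=\mathbb{R}^2$, $P=\{y=0\}$ and $\widetilde X=y^{2}x^{3}\chi(y)\,\partial_x$ with $\chi$ a cutoff equal to $1$ for $|y|\le 1/2$ and supported in $|y|<1$: along $y=1/4$ the trajectories blow up in finite time, so the time-one flow is not defined on $M$ even though the field is supported in the tubular neighbourhood $\{|y|<1\}$. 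The correct fix, which is exactly what the paper does, is to cut off according to the \emph{size} of the field rather than its support: choose a complete Riemannian metric $g$ on $M$ and a function $\phi$ equal to $1$ where $g(V,V)\le 1/2$ and to $0$ where $g(V,V)\ge 1$; since $V_{|P}=0$, the field $\phi V$ has the same germ (hence the same infinite jet) as $V$ along $P$, and being bounded with respect to a complete metric it is complete, so its time-one flow is a diffeomorphism of $M$. With that replacement the rest of your argument goes through; otherwise you would have to weaken the lemma to a diffeomorphism between neighbourhoods of $P$ (which is all that Theorem \ref{Proposition_formal_equivalence} ultimately needs, but is not the statement as given).
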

\begin{proof}
By Borel's Theorem there exists a vector field $V$ on $M$, such that $X=j^{\infty}_{|P}(V)$. We claim that $V$ can be chosen to be complete. Let
$g$ be a complete metric on $M$ and let $\phi:M\to [0,1]$ be a smooth function, such that $\phi=1$ on the set $\{x|g_x(V_x,V_x)\leq 1/2\}$ and
$\phi=0$ on the set $\{x|g_x(V_x,V_x)\geq 1\}$. Since $V_{|P}=0$, it follows that $\phi V$ has the same germ as $V$ around $P$, therefore
$j^{\infty}_{|P}(\phi V)=X$. On the other hand, since $\phi V$ is bounded it is complete, so replace $V$ by $\phi V$.

We will show that $\psi:=\mathrm{Fl}_V$, the flow of $V$ at time 1 satisfies all requirements. Since $j^1_{|P}(V)=0$, it is clear that
$\psi_{|P}=id_P$ and $d\psi_{|P}=id_{T_PM}$.

Let $W\in \mathfrak{X}_P^{\bullet}(M)$ and denote $W_s:=\mathrm{Fl}_{sV}^{*}(W)$ - the pullback of $W$ by the flow of $V$ at time $s$. Since
$W_s$ satisfies the differential equation $\frac{d}{ds}W_s=[V,W_s]$, a simple computation gives the following formula
\[\frac{d}{ds}(\sum_{i=0}^k\frac{(-s)^iad_{V}^i}{i!}(W_s))=\frac{(-s)^k ad_V^{k+1}}{k!}(W_s).\]
This shows that the sum
\[\sum_{i=0}^k\frac{(-s)^iad_{V}^i}{i!}(W_s)\]
modulo $\mathcal{F}_{k+1}$ is independent of $s$, therefore
\[W-\sum_{i=0}^k\frac{(-1)^iad_{V}^i}{i!}(\psi^*(W))\in\mathcal{F}_{k+1}.\]
Applying $j^{\infty}_{|P}$ to this equation yields
\[j^{\infty}_{|P}(W)-\sum_{i=0}^k\frac{(-1)^iad_{X}^i}{i!}j^{\infty}_{|P}(\psi^*(W))\in\hat{\mathcal{F}}_{k+1},\]
hence, the conclusion
\[j^{\infty}_{|P}(W)=e^{-ad_X}j^{\infty}_{|P}(\psi^*(W)).\]
\end{proof}

\subsection*{The cohomology of the restricted algebroid}
Let $(M,\pi)$ be a Poisson manifold and $P\subset M$ a closed embedded Poisson submanifold. The cohomologies we are considering are all versions
of the Poisson cohomology $H^{\bullet}_{\pi}(M)$, computed by the complex $\mathfrak{X}^{\bullet}(M)$ of multi-vector fields on $M$ and
differential $d_{\pi}=[\pi,\cdot]$, where $[\cdot,\cdot]$ is the Nijenhuis-Schouten bracket. Since $P$ is a Poisson submanifold we have that
$[\pi,I(P)\mathfrak{X}^{\bullet}(M)]\subset I(P)\mathfrak{X}^{\bullet}(M)$, and more general, it follows that
$I^{k}(P)\mathfrak{X}^{\bullet}(M)$ forms a subcomplex. Taking consecutive quotients, we obtain the following complexes
\[(I^{k}(P)\mathfrak{X}^{\bullet}(M)/I^{k+1}(P)\mathfrak{X}^{\bullet}(M),d^k_{\pi}),\]
with differential $d^k_{\pi}$ induced by $[\pi,\cdot]$. Observe that the differential on these complexes depends only on the first jet of $\pi$
along $P$, therefore, following the philosophy of section \ref{The first order data}, it can be described only in terms of the algebroid $A_P$.

\begin{proposition}\label{Proposition_isomorphic_complexes}
The following two complexes are isomorphic
\[(I^{k}(P)\mathfrak{X}^{\bullet}(M)/I^{k+1}(P)\mathfrak{X}^{\bullet}(M),d_\pi^k)\cong (\Omega^{\bullet}(A_P,\mathcal{S}^k(TP^{\circ})),d_{\nabla^k}),\ \ (\forall) k\geq 0.\]
\end{proposition}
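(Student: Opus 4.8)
The plan is to construct the isomorphism pointwise/fiberwise first, then check it intertwines the differentials. The starting observation is that for a closed embedded submanifold $P\subset M$, there is a canonical identification
\[
I^k(P)\mathfrak{X}^\bullet(M)/I^{k+1}(P)\mathfrak{X}^\bullet(M)\;\cong\;\Gamma\bigl(\mathcal{S}^k(TP^\circ)\otimes \Lambda^\bullet T_PM\bigr),
\]
coming from the algebraic fact that $I^k(P)/I^{k+1}(P)\cong \Gamma(\mathcal{S}^k(N^*P))$ where $N^*P=TP^\circ=$ conormal bundle, together with the restriction $\mathfrak{X}^\bullet(M)\to \Gamma(\Lambda^\bullet T_PM)$ of multivector fields to $P$. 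Then, using that $A_P=T^*_PM$ so that $A_P^*=T_PM$ and hence $\Lambda^\bullet A_P^*=\Lambda^\bullet T_PM$, the right-hand side $\Omega^\bullet(A_P,\mathcal{S}^k(TP^\circ))=\Gamma(\Lambda^\bullet A_P^*\otimes \mathcal{S}^k(TP^\circ))$ is literally the same space of sections. So the underlying graded vector spaces are canonically isomorphic; the content is entirely in matching the differentials.

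First I would set up notation carefully: pick (locally, or using a tubular neighbourhood as in Remark~\ref{Proposition_first_order_data}) coordinates adapted to $P$, say $(x^i)$ along $P$ and transverse coordinates $(y^a)$ vanishing on $P$, so that $I(P)=(y^1,\dots,y^r)$, $TP^\circ$ is spanned by the $dy^a_{|P}$, and a representative of a class in $I^k/I^{k+1}\cdot\mathfrak{X}$ is $y^{a_1}\cdots y^{a_k}\,W$ with $W$ a multivector field, whose class depends only on $W_{|P}\in\Gamma(\Lambda^\bullet T_PM)$ and on the symmetrization in the $a_j$'s. Under the identification this corresponds to the element $(dy^{a_1}\vee\cdots\vee dy^{a_k})_{|P}\otimes W_{|P}$ of $\Omega^\bullet(A_P,\mathcal S^k(TP^\circ))$. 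The claim to verify is that $d^k_\pi[y^{a_1}\cdots y^{a_k}W]$ corresponds to $d_{\nabla^k}$ applied to that element. Here one uses that $d^k_\pi$ depends only on $j^1_{|P}\pi$ (already observed in the text) and that the algebroid bracket on $A_P$, the anchor $\pi^\sharp_{|P}$, and the representation $\nabla_\alpha\eta=[\alpha,\eta]$ on $TP^\circ$ (and its symmetric power $\nabla^k$) are all built from exactly this first-order data, as spelled out in Section~\ref{The first order data}.

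The computation splits naturally into two pieces by the Leibniz rule for $[\pi,\cdot]$: the action of $d_\pi=[\pi,\cdot]$ on a product $fW$ with $f=y^{a_1}\cdots y^{a_k}\in I^k$ and $W\in\mathfrak X^\bullet$ gives $[\pi,f]\wedge W \pm f[\pi,W]$, i.e. $(d_\pi f)\wedge W \pm f\,d_\pi W$. The first term, $[\pi,f]=\pi^\sharp(df)=$ Hamiltonian vector field, wedged with $W$, accounts for the part of the Koszul differential $d_{\nabla^k}$ that differentiates the $\mathcal S^k(TP^\circ)$-coefficient via $\nabla^k$ — one checks that modulo $I^{k+1}$, $[\pi,y^{a_1}\cdots y^{a_k}]$ is exactly the derivative of the symmetric product expressed through $\nabla$ on $TP^\circ$, using $\nabla_\alpha(dy^a)=[\alpha,dy^a]$ and the compatibility of the anchor. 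The second term $f\,[\pi,W]$, after restriction to $P$ and noting $f\in I^k$ keeps it in the right filtration step, reproduces the Koszul formula for the algebroid $A_P=T^*_PM$ with the classical $d_\nabla$ on the $\Lambda^\bullet A_P^*$ part — this is essentially the well-known fact that Poisson cohomology of $(M,\pi)$ is Lie algebroid cohomology of $(T^*M,d_\pi)$, restricted to $P$. I expect the main obstacle to be bookkeeping: showing that the symmetrization inherent in passing to $\mathcal S^k(TP^\circ)$ is compatible with the terms coming from $[\pi,f]$ when $f$ is a $k$-fold product, i.e. verifying that no extra terms survive modulo $I^{k+1}$ and that the combinatorial factors in $d_{\nabla^k}$ (which acts as a derivation on symmetric powers) match those produced by the Leibniz expansion of $[\pi,y^{a_1}\cdots y^{a_k}W]$. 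Once the differentials are matched on such generators, bilinearity and the $C^\infty(P)$-multilinearity of both sides (the identification is $C^\infty(P)$-linear) extend the isomorphism of complexes to all of $\Gamma(\Lambda^\bullet A_P^*\otimes\mathcal S^k(TP^\circ))$, completing the proof; the isomorphism is moreover natural, so in particular it is independent of the auxiliary choices used to write it down.
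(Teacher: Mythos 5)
Your construction of the isomorphism is exactly the paper's map $\tau_k$ (the paper writes it globally, $\tau_k(f_1\cdots f_k W)=W_{|P}\otimes df_{1|P}\odot\cdots\odot df_{k|P}$ with $f_i\in I(P)$, rather than in adapted coordinates, and notes directly that it is surjective with kernel $I^{k+1}(P)\mathfrak{X}^{\bullet}(M)$), and your strategy of splitting $[\pi,fW]=[\pi,f]\wedge W\pm f[\pi,W]$ is the same Leibniz-based strategy the paper uses. The one substantive shortfall is that the heart of the proposition --- the identity $\tau_k([\pi,W])=d_{\nabla^k}(\tau_k(W))$ --- is not actually verified in your writeup: you defer precisely the step you call ``bookkeeping'' (matching the symmetrization and combinatorial factors of $d_{\nabla^k}$ against the Leibniz expansion of $[\pi,y^{a_1}\cdots y^{a_k}W]$ modulo $I^{k+1}$), and for general $k$ this is not a triviality, since $[\pi,y^{a_j}]$ only vanishes along $P$ and must be re-expanded in $I(P)$ before its class can be recognized through $\nabla$.

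The idea that closes this cleanly, and which is missing from your plan, is the paper's reduction: both $d_\pi$ and $d_{\nabla^k}$ act as derivations, and $\nabla^k$ is the derivation extension of $\nabla^1$, so it suffices to check the intertwining on generators --- namely on $\phi\in C^{\infty}(M)$ and $X\in\mathfrak{X}^1(M)$ for $k=0$, and on $f\in I(P)$ for $k=1$. These are three short direct computations (using $[d\phi_{|P},d\psi_{|P}]_P=d\{\phi,\psi\}_{|P}$, the anchor $\pi^{\sharp}_{|P}$, and $\nabla^1_{\eta}(df_{|P})=[\eta,df_{|P}]_P$), and they eliminate entirely the symmetric-power combinatorics you anticipate. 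Your coordinate computation could in principle be pushed through instead, so the route is not wrong, but as it stands the decisive verification is asserted rather than proved; either carry out the monomial computation in full or invoke the derivation-extension reduction to generators.
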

\begin{proof}
Since the space of sections of $TP^{\circ}$ is spanned by differentials of elements in $I(P)$, it is easy to see that the map given by
\begin{align*}
\tau_k&:I^{k}(P)\mathfrak{X}^{\bullet}(M)\to \Omega^{\bullet}(A_P,\mathcal{S}^k(TP^{\circ}))=\Gamma(\Lambda^{\bullet}(T_PM)\otimes\mathcal{S}^{k}(TP^{\circ})),\\
&\tau_k(f_1\ldots f_kW)=W_{|P}\otimes df_{1|P} \odot \ldots \odot df_{k|P},
\end{align*}
where $f_1,\ldots,f_k\in I(P)$ and $W\in\mathfrak{X}^{\bullet}(M)$ is well defined and surjective. Moreover its kernel is precisely
$I^{k+1}(P)\mathfrak{X}^{\bullet}(M)$. Hence we are left to prove that
\begin{equation}\label{Commuting_with_differential}
\tau_k([\pi,W])=d_{\nabla^k}(\tau_k(W)),\ \ (\forall) W\in I^k(P)\mathfrak{X}^{\bullet}(M).
\end{equation}
Recall that the algebroid $A_P$ has anchor $\rho=\pi^{\sharp}_{|P}$ and bracket determined by
\[[d\phi_{|P},d\psi_{|P}]_P:=d\{\phi,\psi\}_{|P},\ \ (\forall) \phi,\psi\in C^{\infty}(M).\]
Moreover, for $k=0$, we have that $\nabla^0$ is given by
\[\nabla^0:\Gamma(A_P)\times C^{\infty}(P)\to C^{\infty}(P),\ \ \nabla^0_{\eta}(h)=\mathcal{L}_{\rho(\eta)}(h).\]
Since both differentials $d_{\pi}$ and $d_{\nabla^k}$ act by derivations and $\nabla^k$ is obtained by extending $\nabla^1$ by derivations, it
suffices to prove (\ref{Commuting_with_differential}) for $\phi\in C^{\infty}(M)$ and $X\in\mathfrak{X}^{1}(M)$ (with $k=0$), and for $f\in
I(P)$ (with $k=1$).\\
Let $\phi\in C^{\infty}(M)$ and $\eta\in\Gamma(A_P)$. Using that $\pi$ is tangent to $P$, we obtain
\[\tau_0([\pi,\phi])(\eta)=[\pi,\phi]_{|P}(\eta)=d\phi_{|P}(\pi^{\sharp}_{|P}(\eta))=\mathcal{L}_{\rho(\eta)}(\tau_0(\phi))=d_{\nabla^0}(\tau_0(\phi))(\eta).\]
Let $X\in \mathfrak{X}^{1}(M)$, $\phi,\psi\in C^{\infty}(M)$ and $\eta:=d\phi_{|P}, \theta:=d\psi_{|P}\in\Gamma(A_P)$. Then
\begin{align*}
\tau_0([\pi,X])&(\eta,\theta)=[\pi,X]_{|P}(d\phi_{|P},d\psi_{|P})=\\
&=(\{X(\phi),\psi\}+\{\phi,X(\psi)\}-X(\{\phi,\psi\}))_{|P}=\\
&=\pi^{\sharp}_{|P}(d\phi_{|P})(X_{|P}(d\psi_{|P}))-\pi^{\sharp}_{|P}(d\psi_{|P})(X_{|P}(d\phi_{|P}))-X_{|P}(d\{\phi,\psi\}_{|P})=\\
&=\mathcal{L}_{\rho(\eta)}(\tau_0(X)(\theta))-\mathcal{L}_{\rho(\theta)}(\tau_0(X)(\eta))-\tau_0(X)([\eta,\theta]_P)=\\
&=d_{\nabla^0}(\tau_0(X))(\eta,\theta),
\end{align*}
thus (\ref{Commuting_with_differential}) holds for $X$.\\
Consider now $f\in I(P)$ and $\eta:=d\phi_{|P}\in\Gamma(A_P)$, with $\phi\in C^{\infty}(M)$. The formula defining $\tau_k$ implies that for
every $W\in I^{k}(P)\mathfrak{X}^{\bullet}(M)$, we have that
\[\tau_k(i_{d\phi}(W))=i_{d\phi_{|P}}\tau_k(W).\]
Using this, the following finishes the proof
\begin{align*}
\tau_1([\pi,f])(\eta)&=\tau_1([\pi,f](d\phi))=\tau_1(\{\phi,f\})=d\{\phi,f\}_{|P}=\\
&=[\eta,df_{|P}]_P=\nabla^1_{\eta}(\tau(f))=d_{\nabla^1}(\tau(f))(\eta).
\end{align*}
\end{proof}

\subsection*{Proof of Theorem \ref{Proposition_formal_equivalence}}

By replacing $M$ with a tubular neighborhood of $P$, we can assume that $P$ is closed in $M$. Denote \[\gamma:=j^{\infty}_{|P}\pi_1,
\gamma':=j^{\infty}_{|P}\pi_2\in \hat{\mathfrak{X}}^2_P(M).\] By the identification in Proposition \ref{Proposition_isomorphic_complexes}, the
hypothesis can be recast as follows
\begin{align*}
[\gamma,\gamma]=0,\ \ [\gamma',\gamma']=0,\ \ \gamma-\gamma'\in \hat{\mathcal{F}}_1,\ \
H^2(\hat{\mathcal{F}}^{\bullet}_{k}/\hat{\mathcal{F}}^{\bullet}_{k+1},d_{\gamma})=0,
\end{align*}
for all $k\geq 1$, where $d_{\gamma}:=ad_{\gamma}$. All these conditions are expressed in terms of the graded Lie algebra
$\mathcal{L}^{\bullet}:=\hat{\mathfrak{X}}^{\bullet+1}_P(M)$, with a complete filtration $\hat{\mathcal{F}}$. Theorem \ref{Teo1} from the
Appendix, shows that there exists a formal vector field $X\in \hat{\mathcal{F}}_1^1$, such that $\gamma=e^{ad_X}(\gamma')$. By Lemma
\ref{gauge_real}, there exists a diffeomorphism $\psi$ of $M$, such that $j^{\infty}_{|P}(\psi^*(W))=e^{ad_X}j^{\infty}_{|P}(W)$, for all
$W\in\mathfrak{X}_P^{\bullet}(M)$. This concludes the proof, since
\[j^{\infty}_{|P}(\psi^*(\pi_2))=e^{ad_X}j^{\infty}_{|P}(\pi_2)=e^{ad_X}(\gamma')=\gamma=j^{\infty}_{|P}(\pi_1).\]

\subsection*{Existence of Poisson structures with a specified infinite jet}

The proof of Theorem \ref{Proposition_formal_equivalence} can be applied to obtain a result on existence of Poisson bivectors with a specified
infinite jet. Let $S$ be a closed embedded submanifold of $M$. An element $\hat{\pi}\in \hat{\mathfrak{X}}^2_S(M)$, satisfying
$[\hat{\pi},\hat{\pi}]=0$, will be called a formal Poisson bivector. Observe that
\[\hat{\pi}_{|S}=[\hat{\pi}] \textrm{ modulo } \hat{\mathcal{F}}_0\in \mathfrak{X}^2(S)\]
gives a Poisson structure on $S$. We will say that $S$ is a symplectic leaf on $\hat{\pi}$, if $\hat{\pi}_{|S}$ is nondegenerate. By the
discussion in section \ref{The first order data}, the first jet of $\hat{\pi}$
\[j^1_{|S}(\hat{\pi})=[\hat{\pi}] \textrm{ modulo } \hat{\mathcal{F}}_1\]
determines an algebroid $A_S$ on $T^*_SM$, and thus can be used to construct a Poisson bivector $\pi^1_S$ on some open neighborhood
$\mathcal{U}$ of $S$, whose first jet coincides with that of $\hat{\pi}$. If the cohomology groups
\[H^{2}(A_S;\mathcal{S}^{k}(TS^{\circ}))\]
vanish for all $k\geq 2$, then by the proof of Theorem \ref{Proposition_formal_equivalence}, there exists a formal vector field $X\in
\hat{\mathcal{F}}_1^1$, such that $e^{ad_X}(j^{\infty}_{|S}\pi^1_S)=\hat{\pi}$. Using now Lemma \ref{gauge_real}, we find a diffeomorphism
$\psi:\mathcal{U}\to\mathcal{U}$, such that
\[j^{\infty}_{|S}(\psi^*(\pi^1_S))=e^{ad_X}(j^{\infty}_{|S}\pi^1_S)=\hat{\pi}.\]
Thus $\pi:=\psi^*(\pi^1_S)$ gives a Poisson structure defined on an open neighborhood of $S$ whose infinite jet is $\hat{\pi}$. Hence we have
proved the following statement.
\begin{corollary}
Let $\hat{\pi}\in \hat{\mathfrak{X}}^2_S(M)$ be a formal Poisson structure, for which $S$ is a symplectic leaf. If the algebroid $A_S$ induced
by $j^1_{|S}\hat{\pi}$ satisfies
\[H^{2}(A_S;\mathcal{S}^{k}(TS^{\circ}))=0, \ \ (\forall)\  k\geq 2,\]
then there exists a Poisson structure $\pi$ defined on some open neighborhood of $S$ such that $\hat{\pi}=j^{\infty}_{|S}\pi$.
\end{corollary}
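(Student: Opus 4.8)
The plan is to run, essentially verbatim, the argument used for Theorem~\ref{Proposition_formal_equivalence}, but with the r\^ole of $\pi_1$ played by $\hat{\pi}$ and the r\^ole of $\pi_2$ played by a \emph{genuine} Poisson structure realizing the first jet of $\hat{\pi}$. First I would extract from $\hat{\pi}$ its first jet $j^1_{|S}\hat{\pi}=[\hat{\pi}]$ modulo $\hat{\mathcal{F}}_1$; as explained in Section~\ref{The first order data}, since $S$ is a symplectic leaf of $\hat{\pi}$ this amounts to a symplectic form $\omega_S=\hat{\pi}_{|S}^{-1}$ on $S$ together with an algebroid structure on $A_S=T^*_SM$ fitting into the extension~(\ref{shortexact2}). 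Choosing any tubular neighborhood $\mathcal{E}$ of $S$, Vorobjev's construction (Section~\ref{The first order data}) produces a bona fide Poisson bivector $\pi^1_S=\pi^1_S(A_S,\omega_S,\mathcal{E})$ on some open neighborhood $\mathcal{U}$ of $S$ whose first jet along $S$ coincides with $j^1_{|S}\hat{\pi}$. Replacing $M$ by $\mathcal{U}$, I may assume $S$ is closed.

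Next I would transport everything into the complete graded Lie algebra $\mathcal{L}^{\bullet}:=\hat{\mathfrak{X}}^{\bullet+1}_S(M)$ with its complete filtration $\hat{\mathcal{F}}$, setting $\gamma:=\hat{\pi}$ and $\gamma':=j^{\infty}_{|S}\pi^1_S$. Then $[\gamma,\gamma]=0$ and $[\gamma',\gamma']=0$, and because $\pi^1_S$ and $\hat{\pi}$ have the same first jet along $S$ we have $\gamma-\gamma'\in\hat{\mathcal{F}}_1$. By Proposition~\ref{Proposition_isomorphic_complexes}, the associated-graded complex $(\hat{\mathcal{F}}^{\bullet}_k/\hat{\mathcal{F}}^{\bullet}_{k+1},d_\gamma)$ is isomorphic to $(\Omega^{\bullet}(A_S,\mathcal{S}^k(TS^{\circ})),d_{\nabla^k})$, so the cohomological hypothesis gives $H^2(\hat{\mathcal{F}}^{\bullet}_k/\hat{\mathcal{F}}^{\bullet}_{k+1},d_\gamma)=0$ for all $k\geq 1$. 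These are exactly the hypotheses of Theorem~\ref{Teo1} of the Appendix, which then yields a formal vector field $X\in\hat{\mathcal{F}}_1^1$ with $e^{ad_X}(\gamma')=\gamma$, i.e.\ $e^{ad_X}(j^{\infty}_{|S}\pi^1_S)=\hat{\pi}$.

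Finally I would convert this formal gauge transformation into an honest diffeomorphism using Lemma~\ref{gauge_real}: applied to $X\in\hat{\mathcal{F}}_1^1$ it produces a diffeomorphism $\psi$ of $\mathcal{U}$ with $\psi_{|S}=\mathrm{id}_S$, $d\psi_{|S}=\mathrm{id}_{T_SM}$, and $j^{\infty}_{|S}(\psi^*W)=e^{ad_X}(j^{\infty}_{|S}W)$ for all $W\in\mathfrak{X}^{\bullet}_S(M)$ (its proof handles the non-compact ambient by replacing a Borel realization of $X$ with a complete vector field having the same germ along $S$ and taking its time-one flow). Then $\pi:=\psi^*(\pi^1_S)$ is a Poisson structure on $\mathcal{U}$ with
\[j^{\infty}_{|S}(\pi)=j^{\infty}_{|S}(\psi^*\pi^1_S)=e^{ad_X}(j^{\infty}_{|S}\pi^1_S)=\hat{\pi},\]
as required. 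The one genuinely non-formal ingredient --- and hence the only real obstacle --- is the very first step: producing a \emph{bona fide} Poisson bivector that realizes the prescribed first jet, to serve as a basepoint for the gauge argument. This is where the symplectic-leaf hypothesis is indispensable, since the Example in Section~\ref{The first order data} shows that for a general Poisson submanifold a prescribed first jet need not integrate to any Poisson structure at all; once Vorobjev's model $\pi^1_S$ is available, the remainder is a literal repetition of the proof of Theorem~\ref{Proposition_formal_equivalence}.
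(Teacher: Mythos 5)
Your proposal is correct and is essentially the paper's own argument: build Vorobjev's model $\pi^1_S$ from $j^1_{|S}\hat{\pi}$, run the proof of Theorem \ref{Proposition_formal_equivalence} with $\gamma=\hat{\pi}$ and $\gamma'=j^{\infty}_{|S}\pi^1_S$ to get $X\in\hat{\mathcal{F}}_1^1$ with $e^{ad_X}(\gamma')=\gamma$ via Theorem \ref{Teo1}, and then realize it geometrically by Lemma \ref{gauge_real}, setting $\pi:=\psi^*(\pi^1_S)$. The only blemish is a harmless index shift: since $\hat{\mathcal{F}}_k=j^{\infty}_{|S}\bigl(I^{k+1}(S)\mathfrak{X}^{\bullet}(M)\bigr)$, Proposition \ref{Proposition_isomorphic_complexes} identifies $\hat{\mathcal{F}}_k/\hat{\mathcal{F}}_{k+1}$ with $\Omega^{\bullet}(A_S,\mathcal{S}^{k+1}(TS^{\circ}))$, not $\mathcal{S}^{k}$, which is precisely why the hypothesis for $k\geq 2$ gives vanishing for all filtration degrees $k\geq 1$ as you state.
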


\section{Proofs of the criteria}

We devote this section to explaining and proving the corollaries from the Introduction. 

\subsection*{Integration of algebroids and differentiable cohomology}

We start by recalling some properties of Lie groupoids and Lie algebroids. For the theory of Lie groupoids, see \cite{Mackenzie, MM}. A Lie
groupoid over a manifold $B$ will be denoted by $\mathcal{G}$, the source and target maps by $s, t: \mathcal{G}\rmap B$ and the unit map by $u:
B\rmap \mathcal{G}$. We will assume all Lie groupoids to be Hausdorff. Every Lie groupoid $\mathcal{G}$ has an associated Lie algebroid
$A(\mathcal{G})$ over $B$, which is the infinitesimal counterpart of $\mathcal{G}$. A Lie algebroid $\mathcal{A}$ is called \textbf{integrable}
if $\mathcal{A}\cong A(\mathcal{G})$ for some Lie groupoid $\mathcal{G}$. The relation between Lie algebroids and Lie groupoids is similar to
that between Lie algebras and Lie groups, the most significant difference is that not every Lie algebroid is integrable.

Recall that a \textbf{transitive Lie algebroid} is a Lie algebroid $\mathcal{A}\to B$ with surjective anchor. For example, if $S\subset M$ is a
symplectic leaf of a Poisson manifold $(M,\pi)$, then the algebroid $A_S$ is a transitive. A groupoid $\mathcal{G}$ is called
\textbf{transitive} if the map $(s,t):\mathcal{G}\to M\times M$ is surjective. The Lie algebroid of a transitive groupoid is transitive.
Conversely, if the base $B$ of a transitive algebroid $\mathcal{A}$ is connected, and $\mathcal{A}$ is integrable, then any Lie groupoid
$\mathcal{G}$ integrating it is transitive. Any transitive groupoid is a \textbf{gauge groupoid} i.e.\ it is of the form $P\times_G P$, where
$G$ is a Lie group and $p:P\to B$ is a principal $G$-bundle. For $P$ one can take any $s$-fiber $s^{-1}(x)$ of $\mathcal{G}$ for $x\in B$, and
$G:=s^{-1}(x)\cap t^{-1}(x)$ (see \cite{Mackenzie} for more details). We can recover $\mathcal{A}$ from $P$ as follows: as a bundle
$\mathcal{A}=TP/G$, the Lie bracket is induced by the identification
\[\Gamma(\mathcal{A})=\mathfrak{X}(P)^G,\]
and the anchor is given by $dp$. Moreover, similar to the theory of Lie groups and Lie algebras, if $\mathcal{A}$ is an integrable transitive
Lie algebroid, then up to isomorphism there is a unique principal bundle $P$ which is connected and simply connected such that $P\times_G P$
integrates $\mathcal{A}$ (see \cite{Mackenzie}). We will also say that $P$ integrates $\mathcal{A}$.

Let $S\subset M$ be a symplectic leaf of a Poisson manifold $(M,\pi)$ and assume that the transitive algebroid $A_S$ is integrable. The
connected and simply connected principal bundle $P\to S$ for which $P\times_G P$ integrates $A_S$ is called \textbf{the Poisson homotopy cover}
of $S$.  We will say that $P$ is smooth, if $A_S$ is integrable (this terminology is justified by the fact that $P$ exists also in the
non-integrable case as a topological principal bundle over $S$ (see \cite{CrFe1})).

Let $\mathcal{A}$ be a transitive algebroid over a connected basis $B$ and denote by $\mathfrak{g}\subset \mathcal{A}$ the kernel of the anchor.
On each fiber of $\mathfrak{g}$ the Lie bracket restricts to a Lie algebra structure $(\mathfrak{g}_x,[\cdot,\cdot]_x)$ and this Lie algebra is
called \textbf{the isotropy Lie algebra at $x$}. In the integrable case, when $\mathcal{A}=A(\mathcal{G})$, the isotropy Lie algebra coincides
with the Lie algebra of the \textbf{isotropy group} $G_x:=s^{-1}(x)\cap t^{-1}(x)$. In the case of a symplectic leaf $S\subset M$ of a Poisson
manifold, for $A_S$ we have that $\mathfrak{g}=TS^{\circ}$.

A Lie groupoid $\mathcal{G}$ is called \textbf{proper}, if $(s,t):\mathcal{G}\to B\times B$ is a proper map.

A \textbf{representation} of a Lie groupoid $\mathcal{G}$ over $B$ is a vector bundle $E\to B$ and a smooth linear action $g:E_x\to E_y$ for
every arrow $g:x\to y$ satisfying the obvious identities. A representation $E$ of $\mathcal{G}$ can be differentiated to a representation of its
Lie algebroid $A(\mathcal{G})$ on the same vector bundle $E$. If the $s$-fibers of $\mathcal{G}$ are connected and simply connected, then every
representation of $A(\mathcal{G})$ comes from a representation of $\mathcal{G}$ (see Proposition 2.2 in \cite{CrFe1}) - and in our applications
this will be usually the case.

The \textbf{differentiable cohomology} of a Lie groupoid $\mathcal{G}$ with coefficients in a representation $E\to B$ is computed by the complex
$\mathcal{C}^p_{\mathrm{diff}}(\mathcal{G};E)$ of smooth maps $c:\mathcal{G}^{(p)}\to E$, where
\[\mathcal{G}^{(p)}:=\{(g_1,\ldots, g_p)\in \mathcal{G}^p | s(g_i)=t(g_{i+1}), i=1,\ldots, p-1\}\]
with $c(g_1,\ldots,g_p)\in E_{t(g_1)}$, and with differential given by
\begin{align*}
dc(g_1, \ldots ,g_{p+1}) &= g_1c(g_2, \ldots , g_{p+1})+\\
& + \sum_{i=1}^p(-1)^i c(g_1, \ldots , g_ig_{i+1}, \ldots , g_{p+1}) + (-1)^{p+1}c(g_1,\ldots,g_p).
\end{align*}
The resulting cohomology groups will be denoted $H^{\bullet}_{\mathrm{diff}}(\mathcal{G},E)$. For more details on this subject see
\cite{Haefliger}.

In the following proposition we list some results from \cite{Cra} which are needed in the proofs of the corollaries from the Introduction.

\begin{proposition}\label{vanishing_proposition}
Let $\mathcal{G}$ be a Lie groupoid over $B$ with Lie algebroid $\mathcal{A}$ and $E\to B$ a representation of $\mathcal{G}$.
\begin{itemize}
\item[(1)] If the $s$-fibers of $\mathcal{G}$ are cohomologically 2-connected, then we have that
\[H^2(\mathcal{A};E)\cong H^2_{\mathrm{diff}}(\mathcal{G};E).\]
\item[(2)] If $\mathcal{G}$ is proper, then $H^2_{\mathrm{diff}}(\mathcal{G};E)=0$.
\item[(3)] If $\mathcal{G}$ is transitive, then
\[H^2_{\mathrm{diff}}(\mathcal{G};E)\cong H^2_{\mathrm{diff}}(\mathcal{G}_x;E_x),\]
where $x\in B$ and $\mathcal{G}_x:=s^{-1}(x)\cap t^{-1}(x)$.
\end{itemize}
\end{proposition}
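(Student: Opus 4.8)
The plan is to cite and assemble three results from \cite{Cra}, each corresponding to one item, rather than reprove them from scratch; the proposition is stated precisely so that it packages exactly what the corollaries in Section 3 will need. For item (1), the key input is the Van Est map relating groupoid differentiable cohomology to algebroid cohomology. First I would recall that for any Lie groupoid $\mathcal{G}$ with algebroid $\mathcal{A}$ and representation $E$, there is a Van Est homomorphism $H^\bullet_{\mathrm{diff}}(\mathcal{G};E)\to H^\bullet(\mathcal{A};E)$, and that this map is an isomorphism in degrees $\leq n$ and injective in degree $n+1$ whenever the $s$-fibers of $\mathcal{G}$ are (smoothly) $n$-connected — this is the main theorem of \cite{Cra}. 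Taking $n=2$ and noting that ``cohomologically $2$-connected'' suffices for the relevant range of the argument (the Van Est theorem of \cite{Cra} is stated in terms of cohomological connectedness of the fibers), we get the isomorphism in degree $2$. I would just cite the appropriate theorem of \cite{Cra} here; no computation is needed.

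For item (2), the statement is that proper groupoids have vanishing differentiable cohomology in positive degrees with arbitrary (in particular non-unimodular) coefficients, which again is one of the cohomology-vanishing results of \cite{Cra}: the standard averaging argument using a Haar system together with a cutoff function on the (Hausdorff, proper) groupoid produces a contracting homotopy for the differentiable complex $\mathcal{C}^\bullet_{\mathrm{diff}}(\mathcal{G};E)$, exactly as in the group case via averaging over a compact group. Since we have assumed all groupoids Hausdorff and $\mathcal{G}$ is proper, a Haar system exists and the averaging operator lands in smooth cochains, so $H^p_{\mathrm{diff}}(\mathcal{G};E)=0$ for $p\geq 1$, in particular for $p=2$. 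For item (3), if $\mathcal{G}$ is transitive over a connected base $B$ then $\mathcal{G}\cong P\times_G P$ is a gauge groupoid, and restriction of cochains to the isotropy group $\mathcal{G}_x=s^{-1}(x)\cap t^{-1}(x)$ together with the induced map $E\mapsto E_x$ on coefficients induces an isomorphism on differentiable cohomology; concretely, a smooth $\mathcal{G}$-cochain is determined by its restriction to $\mathcal{G}_x$ because every arrow of $\mathcal{G}$ factors through $x$ up to the $P$-bundle structure, and conversely any $G$-cochain extends by equivariance. Again this is exactly Proposition of the relevant form in \cite{Cra} (compatibility of differentiable cohomology with Morita equivalence, of which this is the special case $P\times_G P \sim G$), so I would cite it.

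The only genuine subtlety, and the step I expect to require the most care, is in item (1): matching the precise connectivity hypothesis (``cohomologically $2$-connected $s$-fibers'') with the precise conclusion (isomorphism in degree exactly $2$), since the Van Est theorem in \cite{Cra} trades $n$-connectedness for iso-up-to-degree-$n$ and mono-in-degree-$(n+1)$, and one must be sure the degree bookkeeping gives an honest isomorphism in degree $2$ under the stated hypothesis; the corollaries only invoke this for groupoids whose $s$-fibers are simply connected principal bundles with vanishing $H^2_{\mathrm{dR}}$, so in the applications the hypothesis is comfortably met. Beyond that, the proof is essentially a bundling of \cite{Cra}'s Van Est isomorphism, proper-groupoid vanishing, and Morita-invariance statements, with the observation that $\mathcal{A}=A_S$ and $E=\mathcal{S}^k(TS^\circ)$ fit this framework because $A_S$ is transitive (as recalled above) and $\mathcal{S}^k(TS^\circ)$, being a symmetric power of the isotropy representation, integrates to a representation of $\mathcal{G}$ whenever the $s$-fibers are connected and simply connected (Proposition 2.2 of \cite{CrFe1}), which is part of the hypotheses under which the corollaries apply the proposition.
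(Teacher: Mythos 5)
Your proposal is correct and follows essentially the same route as the paper: the paper's proof simply cites Theorem 4 of \cite{Cra} (Van Est) for (1), Proposition 1 of \cite{Cra} (vanishing for proper groupoids) for (2), and Morita invariance of differentiable cohomology (Theorem 1 of \cite{Cra}, with $\mathcal{G}$ transitive hence Morita equivalent to $\mathcal{G}_x$) for (3). Your additional sketches of the averaging and restriction arguments are harmless elaborations of the same citations.
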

\begin{proof}
(1) is a particular case of Theorem 4 in \cite{Cra} and (2) follows from Proposition 1 in \cite{Cra}. For (3), since $\mathcal{G}$ is
transitive, it is Morita equivalent to $\mathcal{G}_x$ (see \cite{MM}). By Theorem 1 in \cite{Cra}, a Morita equivalence induces an isomorphism
in cohomology and this proves (3).
\end{proof}

\subsection*{The proofs}
\begin{proof}[Proof of Corollary \ref{C_Dual_ss_Lie}]

Recall that the cotangent algebroid of $(\mathfrak{g}^*,\pi_{{\mathrm{lin}}})$ is isomorphic to the action algebroid $\mathfrak{g}\ltimes
\mathfrak{g}^*\to \mathfrak{g}^*$ for the coadjoint action of $\mathfrak{g}$ on $\mathfrak{g}^*$ and that it is integrable by the action
groupoid $G\ltimes \mathfrak{g}^*$, where $G$ denotes the compact, connected and simply connected Lie group of $\mathfrak{g}$. Moreover, the
symplectic leaves of $(\mathfrak{g}^*,\pi_{\mathrm{lin}})$ are the orbits of the action of $G$. So, because $\mathbb{S}({\mathfrak{g}})$ is $G$
invariant it is a union of symplectic leaves, therefore a Poisson submanifold. The algebroid $A_{\mathbb{S}({\mathfrak{g}})}$ is isomorphic to
the action algebroid $\mathfrak{g}\ltimes \mathbb{S}({\mathfrak{g}})$, therefore it is integrable by the action groupoid $G\ltimes
\mathbb{S}({\mathfrak{g}})$. Since $G$ is simply connected it follows that $H^2_{dR}(G)=0$ (see Theorem 1.14.2 in \cite{DK}). On the other hand
all $s$-fibers of $G\ltimes \mathbb{S}({\mathfrak{g}})$ are diffeomorphic to $G$, and so the assumptions of Proposition
\ref{vanishing_proposition} (1) are satisfied, therefore for any representation $E\to \mathbb{S}({\mathfrak{g}})$ of $G\ltimes
\mathbb{S}({\mathfrak{g}})$ we have that
\[H^2(\mathfrak{g}\ltimes \mathbb{S}({\mathfrak{g}}); E)\cong H^2_{\mathrm{diff}}(G\ltimes \mathbb{S}({\mathfrak{g}});E).\]
Since $G\ltimes \mathbb{S}({\mathfrak{g}})$ is compact it is proper, hence by Proposition \ref{vanishing_proposition} (2), we have that
$H^2_{\mathrm{diff}}(G\ltimes \mathbb{S}({\mathfrak{g}});E)=0$, for every representation $E$. Now the corollary follows from Theorem
\ref{Proposition_formal_equivalence}.
\end{proof}
\begin{proof}[Proof of Corollary \ref{Theorem2}]
Denote by $P$ the Poisson homotopy cover of $S$ with structure group $G$. By hypothesis $P$ is smooth, simply connected and with vanishing
second DeRham cohomology group. Let $\mathcal{G}:=P\times_{G}P$ be the gauge groupoid of $P$. Since every $s$-fiber of $\mathcal{G}$ is
diffeomorphic to $P$, $\mathcal{G}$ satisfies the assumptions of Proposition \ref{vanishing_proposition} (1), therefore
\[H^{2}(A_S;\mathcal{S}^k( TS^{\circ}))\cong H^{2}_{\mathrm{diff}}(\mathcal{G};\mathcal{S}^k(TS^{\circ})).\]
Since $\mathcal{G}$ is transitive, by Proposition \ref{vanishing_proposition} (3), we have that
\[H^{2}_{\mathrm{diff}}(\mathcal{G};\mathcal{S}^k(TS^{\circ}))\cong H^{2}_{\mathrm{diff}}(G;\mathcal{S}^k(T_xS^{\circ})).\]
Since $T_xS^{\circ}\cong \mathfrak{g}$ as $G$ representations (both integrate the adjoint representation of $\mathfrak{g}$), the proof follows
from Theorem \ref{Theorem1}.
\end{proof}
\begin{proof}[Proof of Corollary \ref{Corollary_2}]
This is a direct consequence of Corollary \ref{Theorem2}, because differentiable cohomology of compact groups vanishes (by Proposition
\ref{vanishing_proposition} (2)).
\end{proof}
\begin{proof}[Proof of Corollary \ref{Theorem3}]
Let $x\in S$ and denote by $\mathfrak{g}_x:=T_xS^{\circ}$ the isotropy Lie algebra of the transitive algebroid $A_S$. By hypothesis
$\mathfrak{g}_x$ is reductive, i.e.\ it splits as a direct product of a semi-simple Lie algebra and its center
$\mathfrak{g}_x=\mathfrak{s}_x\oplus \mathfrak{z}_x$, where $\mathfrak{s}_x=[\mathfrak{g}_x,\mathfrak{g}_x]$ and
$\mathfrak{z}_x=Z(\mathfrak{g}_x)$ is the center of $\mathfrak{g}_x$. Since $\mathfrak{g}=TS^{\circ}$ is a Lie algebra bundle, it follows that
this splitting is in fact global:
\[\mathfrak{g}=[\mathfrak{g},\mathfrak{g}]\oplus Z(\mathfrak{g})=\mathfrak{s}\oplus\mathfrak{z}.\]
Since $\mathfrak{s}=[\mathfrak{g},\mathfrak{g}]$ is an ideal of $A_S$, we obtain a short exact sequence of algebroids
\[0\to\mathfrak{s}\to A_S\to A_S^{\textrm{ab}}\to 0,\]
with $A_S^{\textrm{ab}}=A_S/[\mathfrak{g},\mathfrak{g}]$. Similar to the spectral sequence for Lie algebra extensions (see \cite{Serre}), there
is a spectral sequence for extensions of Lie algebroids (see \cite{Mackenzie}, Theorem 5.5 and the remark following it), which in our case
converges to $H^{\bullet}(A_S;\mathcal{S}^{k}(\mathfrak{g}))$, with
\[E_2^{p,q}=H^p(A_S^{\textrm{ab}};H^q(\mathfrak{s};\mathcal{S}^k(\mathfrak{g})))\Rightarrow H^{p+q}(A_S;\mathcal{S}^{k}(\mathfrak{g})).\]
Since $\mathfrak{s}$ is in the kernel of the anchor, $H^q(\mathfrak{s};\mathcal{S}^k(\mathfrak{g}))$ is indeed a vector bundle, with fiber
$H^q(\mathfrak{s};\mathcal{S}^k(\mathfrak{g}))_x=H^q(\mathfrak{s}_x;\mathcal{S}^k(\mathfrak{g}_x))$ and it inherits a representation of
$A_S^{\textrm{ab}}$. Since $\mathfrak{s}_x$ is semi-simple, by the Whitehead Lemma we have that
$H^1(\mathfrak{s}_x;\mathcal{S}^k(\mathfrak{g}_x))=0$ and $H^2(\mathfrak{s}_x;\mathcal{S}^k(\mathfrak{g}_x))=0$. Therefore,
\begin{equation}\label{IncaUna}
H^{2}(A_S;\mathcal{S}^{k}(\mathfrak{g}))\cong H^2(A_S^{\textrm{ab}};\mathcal{S}^k(\mathfrak{g})^{\mathfrak{s}}),
\end{equation}
where $\mathcal{S}^k(\mathfrak{g}_x)^{\mathfrak{s}_x}$ is the $\mathfrak{s}_x$ invariant part of $\mathcal{S}^k(\mathfrak{g}_x)$. The hypothesis
of the theorem tell us that $A_S^{\textrm{ab}}$ is integrable by a principle bundle $P^{\mathrm{ab}}$ which is simply connected, has vanishing
second DeRham cohomology and compact structure group $T$. Therefore, by (\ref{IncaUna}) and by applying Proposition \ref{vanishing_proposition}
(1), (2) and (3) we obtain that
\begin{align*}
H^{2}(A_S;\mathcal{S}^{k}(\mathfrak{g}))&\cong H^2(A_S^{\textrm{ab}};\mathcal{S}^k(\mathfrak{g})^{\mathfrak{s}})\cong
H^2_{\mathrm{diff}}(P^{\mathrm{ab}}\times_{T}P^{\mathrm{ab}};\mathcal{S}^k(\mathfrak{g})^{\mathfrak{s}}) \cong \\
&\cong H^2_{\mathrm{diff}}(T;\mathcal{S}^k(\mathfrak{g}_x)^{\mathfrak{s}_x})=0.
\end{align*}
Thus Theorem \ref{Theorem1} finishes the proof.
\end{proof}

\begin{proof}[Proof of Corollary \ref{Corollary_3}]
Assume that $\mathfrak{g}_{x}$ is semi-simple, $\pi_1(S,x)$ is finite and $\pi_2(S,x)$ is a
 torsion group. With the notation from above, we have that $A_S^{\textrm{ab}}=TS$. $TS$ is integrable
and the simply connected principal bundle integrating it is $\widetilde{S}$, the universal cover of $S$. Finiteness of $\pi_1(S)$ is equivalent
to compactness of the structure group of $\widetilde{S}$. By the Hurewicz theorem $H_2(\widetilde{S},\mathbb{Z})\cong \pi_2(\widetilde{S})$ and
since $\pi_2(\widetilde{S})=\pi_2(S)$ is torsion, we have that $H^2_{\mathrm{dR}}(\widetilde{S})=0$. So the result follows from Corollary
\ref{Theorem3}.
\end{proof}

\section*{Appendix: Equivalence of MC-elements in complete GLA's}

In this Appendix we develop some general facts about graded Lie algebras with a complete filtration, with the aim of proving Theorem \ref{Teo1}
which was used in the proof of Proposition \ref{Proposition_formal_equivalence}. Some of the constructions given here can be also found in
Appendix B.1 of \cite{Bursztyn} in the more general setting of differential graded Lie algebra with a complete filtration. In fact all our
constructions can be adapted to this setup, in particular also Theorem \ref{Teo1}. The analog of Theorem \ref{Teo1}, in the case of differential graded associative
algebras can be found in the Appendix A of \cite{CMB}.

\begin{definitions}
\textbf{(1)} A \textbf{graded Lie algebra} $(\mathcal{L}^{\bullet},[\cdot,\cdot])$ (\textbf{GLA}) consists of a $\mathbb{Z}$-graded vector space
$\mathcal{L}^{\bullet}$ endowed with a graded bracket $[\cdot,\cdot]:\mathcal{L}^{p}\times \mathcal{L}^{q}\to\mathcal{L}^{p+q}$, satisfying
\begin{itemize}
\item[-]graded antisymmetry: $[X,Y]=-(-1)^{|X||Y|}[Y,X]$,
\item[-]the graded Jacobi identity:
$[X,[Y,Z]]=[[X,Y],Z]+(-1)^{|X||Y|}[Y,[X,Z]]$.
\end{itemize}

\textbf{(2)} An element $\gamma\in \mathcal{L}^1$ is called a \textbf{Maurer Cartan element} (\textbf{MC-element}) if it satisfies
$[\gamma,\gamma]=0$.

\textbf{(3)} A \textbf{filtration} on a GLA is a decreasing sequence of homogeneous ideals $\mathcal{F}_n\mathcal{L}^{\bullet}$
\[\mathcal{L}^{\bullet}\supset\mathcal{F}_{0}\mathcal{L}^{\bullet}\supset \ldots \supset\mathcal{F}_n\mathcal{L}^{\bullet}\supset \mathcal{F}_{n+1}\mathcal{L}^{\bullet}\supset\ldots,\]
satisfying
\[[\mathcal{F}_{n}\mathcal{L},\mathcal{F}_{m}\mathcal{L}]\subset
\mathcal{F}_{n+m}\mathcal{L},\ \ [\mathcal{L},\mathcal{F}_{n}\mathcal{L}]\subset \mathcal{F}_{n}\mathcal{L}.\]

\textbf{(4)} A filtration $\mathcal{F}\mathcal{L}$ is called \textbf{complete}, if $\mathcal{L}$ is isomorphic to the projective limit
$\varprojlim \mathcal{L}/\mathcal{F}_{n}\mathcal{L}$.
\end{definitions}

An example of a GLA with a complete filtration was constructed in Section \ref{Formal equivalence of Poisson structures around symplectic
leaves}: starting from a manifold $M$ and a closed embedded submanifold $P\subset M$, we constructed
$(\hat{\mathfrak{X}}^{\bullet+1}_P(M),[\cdot,\cdot])$, the algebra of formal vector fields along $P$, with filtration given by the powers of the
vanishing ideal of $P$.

Let $\mathcal{L}$ be a GLA with a complete filtration $\mathcal{FL}$. Completeness implies that
\[\bigcap_{n\geq 0}\mathcal{F}_n\mathcal{L}=0.\]
To handle the filtration better, we introduce the following function on $\mathcal{L}$, which we will call the $L$-norm of the filtration
\begin{equation}\label{EQ2}
\|\cdot\|:\mathcal{L}\to \mathbb{R}_{\geq 0},\quad \|X\|=\left\{\begin{array}{ccc}1, &\textrm{if}& X\in \mathcal{L}\backslash
\mathcal{F}_{0}\mathcal{L},\\ \frac{1}{2^{n}}, &\textrm{if}& X\in \mathcal{F}_n\mathcal{L}\backslash \mathcal{F}_{n+1}\mathcal{L},\\
0, &\textrm{if} & X=0.\end{array}\right.
\end{equation}
Properties of the filtration are translated into the following properties of the $L$-norm, which we will often use:
\begin{itemize}
\item $\|X\|=0$ if and only if $X=0$,
\item $\|X+Y\|\leq \|X\|\vee\|Y\|$\footnote{$u\vee v$ denotes $\mathrm{max}\{u,v\}$.},
\item $\|\alpha X\|\leq \|X\|, \ \ (\forall) \alpha\in\mathbb{R}$,
\item $\|[X,Y]\|\leq \|X\|\|Y\|$.
\end{itemize}
The first two properties show that $d(X,Y)=\|X-Y\|$ defines a metric on $\mathcal{L}$. The completeness of the filtration is easily seen to be
equivalent to the completeness of the metric space $(\mathcal{L},d)$. Convergent series have a very simple description, as the following
straightforward Lemma shows.
\begin{lemma}\label{LemConv}
A series $\sum_n x_n$, $x_n\in\mathcal{L}$ converges if and only if $x_n\stackrel{n\to\infty}{\to}0$.
\end{lemma}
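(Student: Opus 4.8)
The plan is to prove Lemma~\ref{LemConv} directly from the definition of convergence in the complete metric space $(\mathcal{L},d)$, using the ultrametric-type inequality $\|X+Y\|\leq \|X\|\vee\|Y\|$ recorded above, which is the key structural feature that makes this statement much simpler than its classical analogue over $\mathbb{R}$. Write $S_N=\sum_{n=0}^{N}x_n$ for the partial sums; by definition the series converges iff the sequence $(S_N)_N$ converges in $(\mathcal{L},d)$, and since the metric is complete this is equivalent to $(S_N)_N$ being a Cauchy sequence.

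First I would establish the easy implication: if $\sum_n x_n$ converges, then $(S_N)$ is Cauchy, and in particular $x_N=S_N-S_{N-1}$ satisfies $\|x_N\|=d(S_N,S_{N-1})\to 0$; this is immediate since for any $\varepsilon>0$ there is $N_0$ with $d(S_N,S_M)<\varepsilon$ for $N,M\geq N_0$, and we apply this with $M=N-1$. For the converse, suppose $x_n\to 0$, i.e.\ $\|x_n\|\to 0$. For $N>M$ we have $S_N-S_M=\sum_{n=M+1}^{N}x_n$, so by repeated application of $\|X+Y\|\leq\|X\|\vee\|Y\|$ we get
\[
d(S_N,S_M)=\|S_N-S_M\|\leq \max_{M< n\leq N}\|x_n\|\leq \sup_{n>M}\|x_n\|.
\]
Given $\varepsilon>0$, choose $M_0$ with $\|x_n\|<\varepsilon$ for all $n>M_0$; then $d(S_N,S_M)<\varepsilon$ for all $N>M\geq M_0$, so $(S_N)$ is Cauchy, hence convergent by completeness of the metric. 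This shows the series converges.

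There is no real obstacle here — the statement is labelled ``straightforward'' in the text and the proof is a two-line application of the ultrametric inequality together with metric completeness. The only point that deserves a word of care is the passage from ``$(S_N)$ Cauchy'' to ``$(S_N)$ convergent'', which is exactly where completeness of the filtration (equivalently, of $(\mathcal{L},d)$, as noted just before the Lemma) is used; and the observation that in the ultrametric setting one does \emph{not} need absolute convergence or any summability of the $\|x_n\|$, only $\|x_n\|\to 0$, because the triangle inequality has been upgraded to a maximum. I would state the proof in this order: reduce convergence to Cauchyness via completeness, prove the forward direction by taking consecutive partial sums, prove the backward direction by the displayed estimate, and conclude.
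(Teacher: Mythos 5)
Your proof is correct and is exactly the intended argument: the paper states Lemma \ref{LemConv} without proof (calling it straightforward), and the two-line ultrametric argument you give — forward direction from $x_N=S_N-S_{N-1}$ and Cauchyness, backward direction from $\|S_N-S_M\|\leq\max_{M<n\leq N}\|x_n\|$ plus completeness of $(\mathcal{L},d)$ — is precisely what the author leaves to the reader. Your remark that only $\|x_n\|\to 0$ (not summability) is needed, because the triangle inequality is upgraded to a maximum, correctly identifies the point of the lemma.
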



Note that $\mathfrak{g}(\mathcal{L}):=\mathcal{F}_{1}\mathcal{L}^{0}$ forms a Lie subalgebra of $\mathcal{L}^0$. 
Elements $X\in\mathfrak{g}(\mathcal{L})$ satisfy $\|ad_X(Y)\|\leq \frac{1}{2}\|Y\|$, for all $Y\in\mathcal{L}$, therefore (by Lemma
\ref{LemConv}) the exponential of $ad_X$ converges, and it defines an GLA-automorphism of $\mathcal{L}^{\bullet}$, denoted by
\[Ad(e^X):\mathcal{L}^{\bullet}\to\mathcal{L}^{\bullet},\quad Ad(e^X)Y:=e^{ad_X}(Y)=\sum_{n\geq 0}\frac{ad_X^n}{n!}(Y).\]
By Lemma \ref{LemConv}, the Campbell-Hausdorff formula converges for all $X,Y\in\mathfrak{g}(\mathcal{L})$
\begin{eqnarray}\label{Campbell-Hausdorff}
&&X*Y=X+Y+\sum_{k\geq 1}\frac{(-1)^{k}}{k+1}D_k(X,Y),\ \ \textrm{where}\\
\nonumber && D_k(X,Y)=\sum_{ l_i + m_i > 0}\frac{ad_X^{\phantom{o}l_1}}{l_1!}\circ\frac{ad_Y^{\phantom{o}m_1}}{m_1!}\circ\ldots
\circ\frac{ad_X^{\phantom{o}l_k}}{l_k!}\circ\frac{ad_Y^{\phantom{o}m_k}}{m_k!}(X).
\end{eqnarray}
We will use the notation $\mathcal{G}(\mathcal{L})=\{e^X| X\in \mathfrak{g}(\mathcal{L})\}$, i.e.\ $\mathcal{G}(\mathcal{L})$ is the same
(topological) space as $\mathfrak{g}(\mathcal{L})$, but we just denote its elements by $e^X$. The universal properties of the Campbell-Hausdorff
formula (\ref{Campbell-Hausdorff}), show that $\mathcal{G}(\mathcal{L})$ endowed with the product $e^Xe^Y=e^{X*Y}$ forms a topological group,
and moreover $Ad$ gives an action of $\mathcal{G}(\mathcal{L})$ on $\mathcal{L}$ by continuous graded Lie algebra automorphisms. More precisely
we have the following properties.
\begin{proposition}\label{PropGroup}
The group $\mathcal{G}(\mathcal{L})$ and the map $Ad$ satisfy the following
\begin{itemize}
\item[(a)] multiplication is continuous: $\|X*Y-X'*Y'\|\leq \|X-X'\|\vee\|Y-Y'\|$,
\item[(b)] $Ad$ is a representation: $Ad(e^Xe^Y)=Ad(e^X)\circ Ad(e^Y)$,
\item[(c)] $Ad$ preserves the grading: $Ad(e^X)\mathcal{L}^p=\mathcal{L}^p$,
\item[(d)] $Ad$ commutes with the bracket: $Ad(e^X)[U,V]=[Ad(e^X)U,Ad(e^X)V]$,
\item[(e)] $Ad$ preserves the $L$-norm: $\|Ad(e^X)U\|=\|U\|$,
\item[(f)] $Ad$ is continuous: $\|Ad(e^X)U-Ad(e^Y)V\|\leq
\|U-V\|\vee\|X-Y\|\|V\|$,
\end{itemize}
where $X,Y,X',Y'\in \mathfrak{g}(\mathcal{L})$ and $U,V\in\mathcal{L}^{\bullet}$.
\end{proposition}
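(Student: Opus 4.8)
The plan is to split the six assertions into the algebraic statements (c)--(d), the three metric estimates (a), (e), (f), and the Campbell--Hausdorff compatibility (b), which I expect to be the real work. Two facts carry everything: that $\|\cdot\|$ is an ultrametric, so $\|\sum_n x_n\|\le\sup_n\|x_n\|$ for any convergent series and, the point I will use repeatedly, $\|U+R\|=\|U\|$ whenever $\|R\|<\|U\|$; and that for $X\in\mathfrak g(\mathcal L)=\mathcal F_1\mathcal L^0$ the operator $ad_X$ raises the filtration degree by at least one, so that $\|ad_X^nW\|\le 2^{-n}\|W\|$ for all $W$ and all $n\ge 1$ — this is the estimate recorded just before the statement, and it makes every exponential series below converge.

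For (c): since $X$ has degree $0$, $ad_X$ maps each $\mathcal L^p$ into itself, hence so does $e^{ad_X}$, and it is bijective there with inverse $Ad(e^{-X})$. For (d): the graded Jacobi identity with $|X|=0$ says $ad_X$ is a derivation of the bracket, and iterating gives $ad_X^n[U,V]=\sum_{k=0}^n\binom nk[ad_X^kU,ad_X^{n-k}V]$; dividing by $n!$ and summing over $n$ yields $e^{ad_X}[U,V]=[e^{ad_X}U,e^{ad_X}V]$. The only point requiring attention is the rearrangement of the resulting double series, which I would handle by reducing modulo each $\mathcal F_m$ — there all sums in play are finite, a bracket monomial of length $m$ in elements of $\mathcal F_1$ lying in $\mathcal F_m$ — and then invoking $\bigcap_m\mathcal F_m=0$.

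For the metric estimates I would do (e) first: writing $Ad(e^X)U=U+R$ with $R=\sum_{n\ge 1}\frac{ad_X^n}{n!}U$, one has $\|R\|\le\sup_{n\ge1}\|ad_X^nU\|\le\tfrac12\|U\|$, so $\|R\|<\|U\|$ when $U\ne 0$ and the ultrametric forces $\|Ad(e^X)U\|=\|U\|$ (the case $U=0$ being trivial). For (f), decompose $Ad(e^X)U-Ad(e^Y)V=Ad(e^X)(U-V)+\bigl(e^{ad_X}-e^{ad_Y}\bigr)V$; the first summand has norm $\|U-V\|$ by (e), and for the second I would telescope $ad_X^n-ad_Y^n=\sum_{i=0}^{n-1}ad_X^{\,i}(ad_X-ad_Y)ad_Y^{\,n-1-i}$, use $(ad_X-ad_Y)W=[X-Y,W]$ together with the bounds above to see that each term applied to $V$ has norm $\le\|X-Y\|\|V\|$, and conclude $\|(e^{ad_X}-e^{ad_Y})V\|\le\|X-Y\|\|V\|$; the ultrametric then gives (f). Statement (a) is the same telescoping applied to the series defining $X*Y$: each $D_k(X,Y)$ is a sum of bracket monomials multilinear in arguments drawn from $\{X,Y\}$ and of total length $\ge k+1$, so replacing the arguments one at a time by the primed ones produces terms of norm $\le\bigl(\|X-X'\|\vee\|Y-Y'\|\bigr)2^{-k}$, whence $\|D_k(X,Y)-D_k(X',Y')\|\le\|X-X'\|\vee\|Y-Y'\|$ and (a) follows on summing.

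For (b) — the item I expect to be the main obstacle, being the only step that is not pure book-keeping with the filtration — one must relate $X*Y$ to $ad_X$ and $ad_Y$. I would prove the equivalent identity $e^{ad_{X*Y}}=e^{ad_X}\circ e^{ad_Y}$ (this is (b) after unwinding $e^Xe^Y:=e^{X*Y}$ and $Ad(e^Z):=e^{ad_Z}$) by verifying it modulo each $\mathcal F_n$: there $ad_X$ and $ad_Y$ are nilpotent, since any $n$-fold composition of these operators sends $\mathcal L$ into $\mathcal F_n$, so the Lie algebra they generate is nilpotent and the classical Campbell--Hausdorff theorem for nilpotent Lie algebras applies verbatim; letting $n\to\infty$ and using $\bigcap_n\mathcal F_n=0$ transfers the identity to $\mathcal L$. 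Equivalently and more slickly, one notes that $ad$ is a continuous homomorphism of Lie algebras from $\mathfrak g(\mathcal L)$ into the complete filtered associative algebra $\mathrm{End}(\mathcal L)$, landing in its ideal of degree-raising operators, and transports the formal identity $e^xe^y=e^{x*y}$ through it to get $ad_{X*Y}=ad_X*ad_Y$ and hence $e^{ad_{X*Y}}=e^{ad_X}\circ e^{ad_Y}$. The rearrangement in (d) is a milder instance of the same phenomenon, and apart from these two points everything reduces to the two structural facts stated at the outset.
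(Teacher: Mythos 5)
Your proposal is correct, and it fills in details the paper itself omits: the paper states this proposition without proof, appealing only to ``the universal properties of the Campbell--Hausdorff formula'' together with the filtration estimates recorded just before it, which is precisely what you carry out --- the ultrametric bookkeeping ($\|ad_X^nW\|\le 2^{-n}\|W\|$, the isosceles property, termwise telescoping for (a), (e), (f)) plus the transport of the formal identity $e^xe^y=e^{x*y}$ through the continuous Lie homomorphism $ad$ (equivalently, reduction mod $\mathcal{F}_n$ to the nilpotent BCH theorem) for (b). The only cosmetic slip is in (d), where $U,V$ need not lie in $\mathcal{F}_1$; what makes only finitely many terms survive mod $\mathcal{F}_m$ is that $ad_X^kU\in\mathcal{F}_k$ for $k\ge 1$ (since $[\mathcal{F}_1,\mathcal{L}]\subset\mathcal{F}_1$ and $[\mathcal{F}_1,\mathcal{F}_j]\subset\mathcal{F}_{j+1}$), which is exactly the estimate you use elsewhere, so the argument stands.
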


Let $\gamma$ be an MC-element. Notice that $[\gamma,\gamma]=0$, implies that $d_{\gamma}:=ad_{\gamma}$ is a differential on
$\mathcal{L}^{\bullet}$. The fact that $\mathcal{F}_k\mathcal{L}$ are ideals implies that $(\mathcal{F}_k\mathcal{L}^{\bullet},d_{\gamma})$ are
subcomplexes of $(\mathcal{L}^{\bullet},d_{\gamma})$. The induced differential on the consecutive complexes depends only on $\gamma$ modulo
$\mathcal{F}_1$, and their cohomology groups will be denoted
\[H^n_{\gamma}(\mathcal{F}_{k}\mathcal{L}^{\bullet}/\mathcal{F}_{k+1}\mathcal{L}^{\bullet}).\]

For $e^X\in\mathcal{G}(\mathcal{L})$, $Ad(e^X)\gamma$ is again an MC-element, and $\gamma$ and $Ad(e^X)\gamma$ are called \textbf{gauge
equivalent}. We will use a linear approximation of the action $\mathcal{G}(\mathcal{L})$ on MC-elements.
\begin{lemma}\label{LemAprox}
For $\gamma$ an MC-element and $e^X\in\mathcal{G}(\mathcal{L})$, we have that
\[\|Ad(e^X)\gamma-\gamma+d_{\gamma}X\|\leq\|X\|^2.\]
\end{lemma}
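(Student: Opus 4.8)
The plan is to expand $Ad(e^X)\gamma = e^{ad_X}(\gamma) = \sum_{n\geq 0}\frac{ad_X^n}{n!}(\gamma)$ and isolate the first two terms, which are exactly $\gamma + ad_X(\gamma) = \gamma + [X,\gamma] = \gamma - [\gamma,X] = \gamma - d_\gamma X$. Hence
\[
Ad(e^X)\gamma - \gamma + d_\gamma X = \sum_{n\geq 2}\frac{ad_X^n}{n!}(\gamma).
\]
So the lemma reduces to estimating the $L$-norm of this tail.

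The key estimate is the submultiplicativity $\|[U,V]\|\leq \|U\|\|V\|$ together with $\|X\|\leq \tfrac12$ (since $X\in\mathfrak{g}(\mathcal{L})=\mathcal{F}_1\mathcal{L}^0$) and $\|\gamma\|\leq 1$. First I would show by induction that $\|ad_X^n(\gamma)\|\leq \|X\|^n\|\gamma\|\leq \|X\|^n$ for all $n\geq 0$; the inductive step is just $\|ad_X^{n+1}(\gamma)\| = \|[X,ad_X^n(\gamma)]\|\leq \|X\|\cdot\|X\|^n$. Using the ultrametric inequality $\|U+V\|\leq\|U\|\vee\|V\|$ (and $\|\alpha U\|\leq\|U\|$ to kill the factorials), a convergent series satisfies $\|\sum_n x_n\|\leq \sup_n\|x_n\|$; applying this to the tail gives
\[
\Bigl\|\sum_{n\geq 2}\tfrac{ad_X^n}{n!}(\gamma)\Bigr\|\leq \sup_{n\geq 2}\|ad_X^n(\gamma)\|\leq \sup_{n\geq 2}\|X\|^n = \|X\|^2,
\]
where the last equality uses $\|X\|\leq\tfrac12\leq 1$, so the supremum over $n\geq 2$ of $\|X\|^n$ is attained at $n=2$. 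This yields the claimed bound $\|Ad(e^X)\gamma-\gamma+d_\gamma X\|\leq\|X\|^2$.

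There is no real obstacle here; the only points requiring care are: (i) the series $\sum_{n\geq 2}\frac{ad_X^n}{n!}(\gamma)$ genuinely converges — which follows from Lemma \ref{LemConv} since $\|ad_X^n(\gamma)/n!\|\leq\|X\|^n\to 0$ as $X\in\mathfrak{g}(\mathcal{L})$ forces $\|X\|<1$ — so that the ultrametric bound on the norm of a convergent sum is legitimate; and (ii) the sign bookkeeping identifying $ad_X(\gamma)$ with $-d_\gamma X$. Since $d_\gamma = ad_\gamma$ and $\gamma\in\mathcal{L}^1$, graded antisymmetry gives $[\gamma,X] = -(-1)^{|\gamma||X|}[X,\gamma] = -[X,\gamma]$ for $X\in\mathcal{L}^0$, so indeed $ad_X(\gamma) = [X,\gamma] = -[\gamma,X] = -d_\gamma X$, matching the statement.
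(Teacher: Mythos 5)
Your proof is correct. The paper states Lemma \ref{LemAprox} without proof, and your argument is the natural intended one: expand $Ad(e^X)\gamma=e^{ad_X}(\gamma)$, note $ad_X(\gamma)=-d_\gamma X$ by graded antisymmetry (degrees $0$ and $1$), and bound the tail $\sum_{n\geq 2}\frac{ad_X^n}{n!}(\gamma)$ by $\|X\|^2$ using $\|[U,V]\|\leq\|U\|\,\|V\|$, $\|\alpha U\|\leq\|U\|$, $\|\gamma\|\leq 1$, and the ultrametric inequality. Your two points of care are exactly the right ones; for passing the ultrametric bound to the infinite sum it suffices to observe that each partial sum lies in the closed subspace $\mathcal{F}_{2k}\mathcal{L}^1$ when $\|X\|=2^{-k}$, so the limit does too, which is what your appeal to Lemma \ref{LemConv} and the norm estimate delivers.
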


We have the following criterion for gauge equivalence.
\begin{theorem}\label{Teo1}
Let $(\mathcal{L}^{\bullet},[\cdot,\cdot])$ be a GLA with a complete filtration $\mathcal{F}_n\mathcal{L}$ and $\gamma,\gamma'$ be two Maurer
Cartan elements. If $\gamma-\gamma'\in\mathcal{F}_p\mathcal{L}^{1}$, for some $p\geq 1$ and for all $q\geq p$ we have
\[H^1_{\gamma}(\mathcal{F}_{q}\mathcal{L}^{\bullet}/\mathcal{F}_{q+1}\mathcal{L}^{\bullet})=0,\]
then $\gamma$ and $\gamma'$ are gauge equivalent, i.e.\ there exists an element $e^X\in \mathcal{G}(\mathcal{L})$ such that
$\gamma=Ad(e^X)\gamma'$.
\end{theorem}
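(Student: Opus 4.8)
The plan is to construct $X$ as an infinite Campbell-Hausdorff product of ``small'' corrections, removing the discrepancy between $\gamma$ and $\gamma'$ one filtration degree at a time, and then to show the product converges using completeness. I would argue by a successive approximation (iterative) scheme. Set $\gamma_p := \gamma'$, and suppose inductively that we have found $e^{X_q} \in \mathcal{G}(\mathcal{L})$ with $X_q \in \mathcal{F}_q\mathcal{L}^0$ and a Maurer-Cartan element $\gamma_q := Ad(e^{X_q}e^{X_{q-1}}\cdots e^{X_p})\gamma'$ such that $\gamma - \gamma_q \in \mathcal{F}_q\mathcal{L}^1$. The base case $q = p$ holds with the empty product, by hypothesis. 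For the inductive step I would look at the class of $\gamma - \gamma_q$ in $\mathcal{F}_q\mathcal{L}^1/\mathcal{F}_{q+1}\mathcal{L}^1$ and show it is a $d_\gamma$-cocycle there; since $H^1_\gamma(\mathcal{F}_q\mathcal{L}^\bullet/\mathcal{F}_{q+1}\mathcal{L}^\bullet) = 0$ it is a coboundary, so there is $X_{q+1} \in \mathcal{F}_q\mathcal{L}^0$ with $\gamma - \gamma_q + d_\gamma X_{q+1} \in \mathcal{F}_{q+1}\mathcal{L}^1$. Then Lemma \ref{LemAprox} gives $\|Ad(e^{X_{q+1}})\gamma_q - \gamma_q + d_{\gamma_q}X_{q+1}\| \le \|X_{q+1}\|^2 \le 1/4^q$, and since $\gamma_q \equiv \gamma$ modulo $\mathcal{F}_q$ while $X_{q+1}\in\mathcal F_q$, one has $d_{\gamma_q}X_{q+1} - d_\gamma X_{q+1} = [\gamma_q - \gamma, X_{q+1}] \in \mathcal{F}_{2q}\mathcal{L}^1$; combining these estimates yields $\gamma - \gamma_{q+1} \in \mathcal{F}_{q+1}\mathcal{L}^1$, which closes the induction.

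Next I would verify that the partial products $Y_q := X_q * X_{q-1} * \cdots * X_p$ converge. Since each $X_q \in \mathcal{F}_q\mathcal{L}^0 \subset \mathfrak{g}(\mathcal{L})$ we have $\|X_q\| \le 2^{-q} \to 0$; by continuity of multiplication (Proposition \ref{PropGroup}(a)) the sequence $Y_q$ is Cauchy — indeed $\|Y_{q+1} - Y_q\| = \|X_{q+1}*Y_q - Y_q\| \le \|X_{q+1}\| \vee \|Y_q - Y_q\| $ needs a small adjustment: more carefully, $\|X_{q+1}*Y_q - 0*Y_q\| \le \|X_{q+1} - 0\| \vee \|Y_q - Y_q\| = \|X_{q+1}\|$, so $\|Y_{q+1}-Y_q\|\le 2^{-(q+1)}$, and by Lemma \ref{LemConv} (completeness) $Y_q$ converges to some $X \in \mathfrak{g}(\mathcal{L})$. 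By continuity of $Ad$ (Proposition \ref{PropGroup}(f)) together with continuity of multiplication, $Ad(e^X)\gamma' = \lim_q Ad(e^{Y_q})\gamma' = \lim_q \gamma_q = \gamma$, where the last equality uses $\|\gamma - \gamma_q\| \le 2^{-q} \to 0$. This produces the desired $e^X \in \mathcal{G}(\mathcal{L})$ with $\gamma = Ad(e^X)\gamma'$.

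The main obstacle I anticipate is the cocycle verification in the inductive step: one must check that the leading term of $\gamma - \gamma_q$ in the associated-graded piece $\mathcal{F}_q\mathcal{L}^1/\mathcal{F}_{q+1}\mathcal{L}^1$ is actually annihilated by the induced differential $d_\gamma$. The right way to see this is to subtract the two Maurer-Cartan equations: from $[\gamma,\gamma] = 0 = [\gamma_q,\gamma_q]$ one gets $0 = [\gamma,\gamma] - [\gamma_q,\gamma_q] = [\gamma + \gamma_q,\, \gamma - \gamma_q] = 2[\gamma, \gamma - \gamma_q] - [\gamma - \gamma_q, \gamma - \gamma_q]$, and since $\gamma - \gamma_q \in \mathcal{F}_q\mathcal{L}^1$ the last bracket lies in $\mathcal{F}_{2q}\mathcal{L}^2 \subset \mathcal{F}_{q+1}\mathcal{L}^2$; hence $d_\gamma(\gamma - \gamma_q) = [\gamma, \gamma-\gamma_q] \in \mathcal{F}_{q+1}\mathcal{L}^2$, i.e.\ $\gamma - \gamma_q$ is a cocycle in the quotient complex, as needed. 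Everything else is bookkeeping with the $L$-norm inequalities from Proposition \ref{PropGroup} and Lemma \ref{LemAprox}.
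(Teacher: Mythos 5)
Your proposal is correct and follows essentially the same successive-approximation scheme as the paper's proof: kill the discrepancy $\gamma-\gamma_q$ one filtration degree at a time using the vanishing of $H^1_\gamma$ of the associated graded, control the error with Lemma \ref{LemAprox} and the filtration inequalities, and pass to the limit of the Campbell--Hausdorff products via Lemma \ref{LemConv} and Proposition \ref{PropGroup}. The only (cosmetic) differences, apart from harmless index bookkeeping such as $X_{q+1}\in\mathcal{F}_q\mathcal{L}^0$ giving $\|X_{q+1}\|\leq 2^{-q}$ rather than $2^{-(q+1)}$, are that the paper encodes the cohomological hypothesis once and for all in homotopy operators $h^q_1,h^q_2$ and a uniform estimate, whereas you pick a primitive of the cocycle at each step, and your cocycle verification via subtracting the two Maurer--Cartan equations is exactly the paper's identity $d_{\gamma}(\tilde{\gamma}-\gamma)=-\tfrac{1}{2}[\tilde{\gamma}-\gamma,\tilde{\gamma}-\gamma]$ in disguise.
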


\begin{proof}
By hypothesis, for $q\geq p$, we can find homotopy operators $h^q_1:\mathcal{F}_{q}\mathcal{L}^1\to \mathcal{F}_{q}\mathcal{L}^0$ and
$h^q_2:\mathcal{F}_{q}\mathcal{L}^2\to \mathcal{F}_{q}\mathcal{L}^1$ such that
\[h^q_1(\mathcal{F}_{q+1}\mathcal{L}^1)\subset \mathcal{F}_{q+1}\mathcal{L}^0,\ \ h^q_2(\mathcal{F}_{q+1}\mathcal{L}^2)\subset \mathcal{F}_{q+1}\mathcal{L}^1,\]
\[\textrm{ and }
(d_{\gamma}h^q_1+h^q_2d_{\gamma}-Id)(\mathcal{F}_{q}\mathcal{L}^1)\subset \mathcal{F}_{q+1}\mathcal{L}^1.\] We will first prove an estimate. Let
$q\geq p$ and $\tilde{\gamma}$ a MC-element, with $\|\tilde{\gamma}-\gamma\|\leq \frac{1}{2^{q}}$. Then for
${\widetilde{X}}:=h^{q}_1(\tilde{\gamma}-\gamma)$, we claim that the following estimates hold:
\begin{equation}\label{EQ3}
\|{\widetilde{X}}\|\leq \frac{1}{2^q}, \ \ \|Ad(e^{\widetilde{X}})\tilde{\gamma}-\gamma\|\leq \frac{1}{2^{q+1}}.
\end{equation}
The first estimate follows directly from the fact that ${\widetilde{X}}=h^{q}_1(\tilde{\gamma}-\gamma)\in \mathcal{F}_q\mathcal{L}^0$. To prove
the second we compute:
\begin{align*}
\|Ad(e^{\widetilde{X}})\tilde{\gamma}-\gamma\|&\leq
\|Ad(e^{\widetilde{X}})\tilde{\gamma}-\tilde{\gamma}+d_{\tilde{\gamma}}({\widetilde{X}})\|\vee\|\tilde{\gamma}-d_{\tilde{\gamma}}({\widetilde{X}})-\gamma\|\leq\\
&\leq\|{\widetilde{X}}\|^{2}\vee\|\tilde{\gamma}-\gamma-d_{\gamma}({\widetilde{X}})\|\vee\|[\gamma-\tilde{\gamma},{\widetilde{X}}]\|\leq\\
&\leq\|{\widetilde{X}}\|^{2}\vee\|\tilde{\gamma}-\gamma-d_{\gamma}({\widetilde{X}})\|\vee\|\gamma-\tilde{\gamma}\|\|{\widetilde{X}}\|,
\end{align*}
where for the second inequality we have used Lemma \ref{LemAprox}. Since $\|\tilde{\gamma}-\gamma\|\leq \frac{1}{2^{q}}$, we obtain
\begin{equation}\label{EQ4}
\|Ad(e^{\widetilde{X}})\tilde{\gamma}-\gamma\| \leq \frac{1}{2^{2q}}\vee\|(Id-d_{\gamma}h^{q}_1)(\tilde{\gamma}-\gamma)\|.
\end{equation}
On the other hand, we have that
\begin{align*}
\|(Id-d_{\gamma}h^{q}_1)(\tilde{\gamma}-\gamma)\|&\leq \|(Id-d_{\gamma}h^{q}_1-h^{q}_2d_{\gamma})(\tilde{\gamma}-\gamma)\|\vee
\|h^{q}_2(d_{\gamma}(\tilde{\gamma}-\gamma))\|\leq\\
&\leq \frac{1}{2^{q+1}}\vee \|h^{q}_2(d_{\gamma}(\tilde{\gamma}-\gamma))\|.
\end{align*}
Observe that $d_{\gamma}(\tilde{\gamma}-\gamma)=-\frac{1}{2}[\tilde{\gamma}-\gamma,\tilde{\gamma}-\gamma]$, therefore
$\|d_{\gamma}(\tilde{\gamma}-\gamma)\|\leq \frac{1}{2^{2q}}\leq \frac{1}{2^{q+1}}$, and so we obtain
\begin{equation}\label{EQ5}
\|(Id-d_{\gamma}h^{q}_1)(\tilde{\gamma}-\gamma)\|\leq \frac{1}{2^{q+1}}.
\end{equation}
So, (\ref{EQ4}) and (\ref{EQ5}) imply the second part of (\ref{EQ3}).

We construct a sequence of MC-elements $\{\gamma_k\}_{k\geq 0}$ and a sequence of group elements $\{e^{X_k}\}_{k\geq
1}\in\mathcal{G}(\mathcal{L})$ by the following recursive formulas:
\begin{align*}
\gamma_0&:=\gamma',\\
X_{k}&:=h_1^{p+k-1}(\gamma_{k-1}-\gamma), \ \ \textrm{ for }k\geq 1,\\
\gamma_{k}&:=Ad(e^{X_{k}})\gamma_{k-1},  \ \ \textrm{ for }k\geq 1.
\end{align*}
To show that this formulas give indeed well-defined sequences, we have to check that $\gamma_{k-1}-\gamma\in\mathcal{F}_{p+k-1}\mathcal{L}^1$.
This holds for $k=1$, and in general it follows by applying inductively at each step $k\geq 1$ the estimate (\ref{EQ3}) to
$\widetilde{\gamma}=\gamma_{k-1}$, and $q=p+k-1$, to obtain the inequalities:
\[\|X_{k}\|\leq \frac{1}{2^{p+k-1}}, \ \ \|\gamma_{k}-\gamma\|\leq \frac{1}{2^{p+k}}.\]

Using Proposition \ref{PropGroup} (a) we obtain
\[\|X_k*X_{k-1}\ldots*X_1-X_{k-1}\ldots*X_1\|\leq \|X_k\|\leq \frac{1}{2^{p+k-1}},\]
therefore by Lemma \ref{LemConv}, the product $X_k*X_{k-1}*\ldots*X_1$ converges to some element $X$. Applying Proposition \ref{PropGroup} (a)
$k$ times, we obtain
\[\|X_k*X_{k-1}\ldots*X_1\|\leq \|X_{k}\|\vee\|X_{k-1}\|\vee\ldots \vee\|X_1\|\leq \frac{1}{2^p},\]
therefore  $\|X\|<1$, thus $X\in\mathfrak{g}(\mathcal{L})$. On the other hand, we have that
\begin{eqnarray*}
\|Ad(e^X)\gamma'-\gamma\|&\leq& \|Ad(e^X)\gamma'-\gamma_{k}\|\vee \|\gamma_{k}-\gamma\|\leq\\
&\leq& \|Ad(e^X)\gamma'-Ad(e^{X_k*\ldots
*X_1})\gamma'\|\vee
\frac{1}{2^{p+k}}\leq\\
&\leq&\|X-X_k*\ldots *X_1\|\vee \frac{1}{2^{p+k}},
\end{eqnarray*}
where for the last estimate we have used Proposition \ref{PropGroup}
(f). If we let $k\to \infty$ we obtain the conclusion:
$Ad(e^X)\gamma'=\gamma$.
\end{proof}

\bibliographystyle{amsplain}

\begin{thebibliography}{11}

\bibitem{CMB} C.~Arias Abad, M.~Crainic, B.~Dherin,
          Tensor products of representations up to homotopy
          arXiv:1009.5852v1

\bibitem{Bursztyn} H.~Bursztyn, V.~Dolgushev, S.~Waldmann,
      Morita equivalence and characteristic classes of star products,
      arXiv:0909.4259.

\bibitem{Conn} J.~Conn, Normal forms for smooth Poisson structures,
  \emph{Annals of Math.~}\textbf{121} (1985), 565--593.


\bibitem{Cra} M.~Crainic,
   Differentiable and algebroid cohomology, van Est isomorphisms, and characteristic classes,
   \emph{Comment.~Math.~Helv.~}\textbf{78} (2003), no. 4, 681--721.


\bibitem{CrFe1} M.~Crainic and R.L.~Fernandes,
   Integrability of Lie brackets,
   \emph{Ann.~of Math.~(2)} \textbf{157} (2003), 575--620.


\bibitem{CrMa} M.~Crainic and I.~M\v arcu\c t,
   A normal form theorem around symplectic leaves,
   arXiv:1009.2090v1

\bibitem{DK} J.~J.~Duistermaat and J.~Kolk,
   \emph{Lie groups},
   Universitext, Springer-Verlag Berlin, 2000.

\bibitem{Haefliger} A.~Haefliger,
     Differential cohomology
     \emph{Differential topology} (Varenna, 1976),  pp. 19--70, Liguori, Naples, 1979.

\bibitem{Serre} G.~Hochschild and J-P.~Serre,
    Cohomology of Lie Algebras,
     \emph{Ann. of Math.} (2) \textbf{57} (1953), 591--603.

\bibitem{VorCo}V.M.~Itskov, M.~Karasev and Yu.M.~Vorobjev,
        Infinitesimal Poisson cohomology,
     \emph{Amer. Math. Soc. Transl.} (2), Vol. 187, 1998, 327�360.


\bibitem{Mackenzie} K.~Mackenzie,
 \emph{Lie groupoids and {L}ie algebroids in differential geometry},
  London Math. Soc. Lecture Note Ser. 124, Cambridge Univ. Press (1987)



\bibitem{Moerdijk} I.~Moerdijk and G.~Reyes
   \emph{Models for smooth infinitesimal analysis},
    Springer-Verlag, New York, 1991.


\bibitem{MM} I.~Moerdijk and J.~Mrcun,
   \emph{Introduction to foliations and Lie groupoids},
   Cambridge Studies in Advanced Mathematics, 91. Cambridge University Press, 2003.

\bibitem{Izu} I.~Vaisman,
\emph{Lectures on the geometry of Poisson manifolds},
    Progress in Mathematics, 118. Birkh�user Verlag, Basel, 1994.

\bibitem{Vorobjev} Y.~Vorobjev,
   Coupling tensors and Poisson geometry near a single symplectic leaf,
   \emph{Banach Center Publ.~}\textbf{54} (2001), 249--274.


\bibitem{Wein} A.~Weinstein,
   The local structure of Poisson manifolds,
   \emph{J.~Differential Geom.~}\textbf{18} (1983), 523--557.


\end{thebibliography}
\def\lllll{}


\end{document}